\newcommand\AND{\quad\text{and}\quad}
\newcommand\Dcal{\mathcal{D}}
\newcommand\diam{\text{\rm diam}}
\newcommand\Dir{\mathcal{E}}
\newcommand\ep{\varepsilon}
\newcommand\HD{\mathcal {HD}}
\newcommand\Lap{\mathcal{L}}
\newcommand\La{\Lambda}
\newcommand\msf{\mathsf m}
\newcommand\Prob{\mathsf{Pr}}
\newcommand\Q{\mathbb{Q}}
\newcommand\R{\mathbb R}
\newcommand\supp{\operatorname{\rm supp}}
\newcommand\uno{\mathbf{1}}
\newcommand\wh{\widehat}
\newcommand\wt{\widetilde}
\numberwithin{equation}{section}
\newtheoremstyle{mythm}
  {9pt}
  {9pt}
  {\itshape}
  {0pt}
  {\bfseries}
  {}
  { }
  {\thmnumber{(#2)}\thmname{ #1}\thmnote{ #3}}
\newtheoremstyle{mydef}
  {9pt}
  {9pt}
  {\normalfont}
  {0pt}
  {\bfseries}
  {}
  { }
  {\thmnumber{(#2)}\thmname{ #1}\thmnote{ #3}}
\theoremstyle{mythm}
\newtheorem{thm}[equation]{Theorem.}
\newtheorem{lem}[equation]{Lemma.}
\theoremstyle{mydef}
\newtheorem{dfn}[equation]{Definition.}
\newtheorem{rmk}[equation]{Remark.}
\begin{document}$\,$ \vspace{-1truecm}
\title{On the duality between jump processes on ultrametric spaces
and random walks on trees}
\author{\bf Wolfgang WOESS}
\address{\parbox{.8\linewidth}{Institut f\"ur Mathematische Strukturtheorie 
(Math C),\\ 
Technische Universit\"at Graz,\\
Steyrergasse 30, A-8010 Graz, Austria\\}}
\email{woess@TUGraz.at}
\date{July 12, 2012} 
\subjclass[2000] {05C05, 
31C05, 
60G50,	
60J50 
}
\keywords{Compact ultrametric space, tree, boundary, random walk, jump process,
Dirichlet form, Na\"\i m kernel}
\begin{abstract}
The purpose of these notes is to clarify the duality between a natural class
of jump processes on compact ultrametric spaces -- studied in current work of 
{\sc Bendikov, Girgor'yan and Pittet}~\cite{BGP1}, \cite{BGP2} -- and nearest neighbour walks on trees. 
Processes of this type have appeared in recent work of {\sc Kigami}~\cite{Ki}.
Every compact ultrametric space arises as the boundary of a locally finite
tree. The duality arises via the Dirichlet forms: one on the tree associated
with a random walk and the other on the boundary of the tree, which is given
in terms of the Na\"\i m kernel. Here, it is explained that up to a linear time
change by a unique
constant, there  is a one-to-one correspondence between the above processes
and Dirichlet regular random walks.

\smallskip

For definitive publication, an adapted version of these notes will be integrated into 
the paper \cite{BGP2}.

\end{abstract}


\maketitle

\markboth{{\sf W. Woess}}
{{\sf Processes on ultrametric spaces
and random walks on trees}}
\baselineskip 15pt

\section{Introduction}\label{sec:intro}

In recent work, {\sc Kigami}~\cite{Ki} starts with a transient
nearest neighbour random walk on a tree and constructs a naturally
associated jump process on the boundary of the tree, a Cantor set.
Using this approach, he undertakes a detailed analysis of the process
on the boundary. 

Arriving from a completely different viewpoint, {\sc Bendikov, 
Grigor'yan and Pittet}~\cite{BGP1} introduce a very natural class of processes on 
discrete, non-compact ultrametric spaces and provide a detailed analsysis of
them. In ongoing work by the same authors~\cite{BGP2}, this elegant approach 
is generalised to non-discrete
ultrametric spaces, compact as well a non-compact.
Those spaces are assumed to be complete and locally compact, and to possess
no isolated points. There is a natural way how such an ultrametric space
arises as 
the geometric boundary at infinity of a locally 
finite rooted tree 
where each vertex has at least two forward neighbours. 

The purpose of this note is to answer the obvious question (posed to
the author by A. Bendikov) how the approaches of Kigami and of Bendikov,
Grigor'yan and Pittet are related in the compact case (compactness is 
inherent in Kigami's work but not necessary for the approach of \cite{BGP1},
\cite{BGP2}): the relation is basically one-to-one.

``Basically'' means that we need to restrict to random walks on trees which are
\emph{Dirichlet regular,} that is, the Dirichlet problem at infinity
admits solution, or equivalently, the Green kernel of the random walk
vanishes at infinity. We comment on this condition, which 
appears to be natural in the present context, at the end of \S\ref{sec:duality}.

Given such a nearest neighbour random walk on a tree, its reversibility leads to
an interpretation of the tree as an infinite electric network. This comes along
with a natural Dirichlet form on the tree, and a natural approach is to use the
Dirichlet form on the boundary which reproduces the power (``energy'')
of harmonic functions on the tree via their boundary values. This form on the
boundary is computed with some effort in \cite{Ki}; it induces the jump 
process studied there. Now, that form on the boundary is an integral
with respect to the \emph{Na\"\i m kernel}, which goes back to
the work of {\sc Na\"\i m} \cite{Na} and {\sc Doob}~\cite{Do} in the setting
of abstract potential theory on spaces which are locally Euclidean.
Trees do not have the latter property, but the validity of the resulting
fomula for the power of harmonic functions is proved for general infinite 
electric networks ($\equiv$ reversible random walks) in a forthcoming
paper of {\sc Kaimanovich and Georgakopoulos}~\cite{GK}. A direct and rather 
simple proof for the case of trees is also given in the present note.

This paper is basically expository, based on a certain experience of the
author in handling nearest neighbour random walks on trees as in Chapter
9 of his book \cite{W-Markov}. Some results are new, resp. simplify or clarify
the approach of \cite{Ki}. Some emphasis is laid on introducing all the
background without rush and in a unified notation. 

In \S\ref{sec:ultra-tree},
it is explained how ultrametric spaces are related with trees and their
geometric boundaries. We introduce the notion of an \emph{ultrametric element}
on a tree, which induces an ultrametric on the boundary of the tree. Different
ultrametric elements induce the same topology on the boundary.

In \S\ref{sec:processes}, we present the two classes of stochastic processes
which are the subject of this paper. In \S\ref{sec:processes}.{\bf A},
we present the isotropic jump processes of \cite{BGP2} on an ultrametric
space, which is assumed right away to be the boundary of a tree.  
Every isotropic jump process relies on three input data: an ultrametric element
$\phi$, a probability measure $\mu$ on the boundary of the tree and a probability 
measure $\sigma$ on $\R^+$. The process is called \emph{standard,} if $\sigma$
is the inverse exponential distribution. With a suitable change of the 
ultrametric, 
but the same $\mu$, every process becomes a standard process.

In \S\ref{sec:processes}.{\bf B}, we outline the basic facts regarding
transient nearest neighbour random walks on trees. We present the Na\"\i m kernel
and state the theorem that relates the Dirichlet form on the space
of harmonic functions with finite power with the Dirichlet form on the boundary
that is computed via that kernel in terms of the boundary values of the 
involved harmonic functions.

In \S\ref{sec:duality}, we finally explain the relation between the 
jump processes of \cite{BGP2} and the processes of \cite{Ki} induced on the
boundary of a tree by a random walk on that tree.
In \S\ref{sec:duality}.{\bf A}, it is shown that every boundary process induced
by a random walk is an isotropic jump process in the sense of \cite{BGP2}.
In \S\ref{sec:duality}.{\bf B}, it is shown that up to a unique linear time change,
every isotropic jump process on the boundary of a tree arises from a uniquely
determined random walk as the process of \cite{Ki}.
\S\ref{sec:duality}.{\bf C} contains a discussion on some of the assumptions
and a few clarifications (in random walk terminology)
regarding some of the statements in \cite{Ki}.
 
The Appendix, \S\ref{sec:appendix}, contains a proof of the Doob-Na\"\i m formula
for nearest neighbour random walks on trees.

We conclude this introduction by a brief overview on previous work, inculding also
non-compact ultrametric spaces, to which the methods of {\sc Bendikov,
Grigor'yan and Pittet}~\cite{BGP1}, \cite{BGP2} apply equally well.

The classical example of a non-compact ultrametric space is the field 
$\Q_p$ of $p$-adic numbers. A family a Laplace-type operators on $\Q_p\,$ 
was introduced and studied in the context of $p$-adic analysis by
{\sc Vladimirov} and collaborators; see the book by
{\sc Vladimirov, Volovich and Zelenov}~\cite{VVZ} and its references. 
The Vladimirov Laplacian
is recovered in an elegant way by the approach of \cite{BGP2}.

{\sc Fig\`a-Talamanca}~\cite{Fi} and 
{\sc Del Muto and Fig\`a-Talamanca}~\cite{DeFi1}, \cite{DeFi2} -- see also 
 and 
{\sc Baldi, Casadio-Tarabusi and Fig\`a-Talamanca}~\cite{BCF} --
have constructed
diffusion processes on homogeneous ultrametric spaces, including the 
$p$-adic number field, via a harmonic analysis
approach. The construction on $\Q_p$ starts with a discrete process on a level set
of the homoegenous tree, and then  a rescaling procedure leads to a process
on the ``lower'' boundary of the tree, which is $\Q_p$. 

{\sc Kochubei}~\cite{Ko} (+ references in that book)
has undertaken a careful analysis of the Vladimirov Laplacian, closely
related with the isotropic jump processes of \cite{BGP2}.


{\sc Albeverio and Karwowski}~\cite{AK1}, \cite{AK2} construct continuous-time
processes on the $p$-adics, and more generally on the non-compact
ultrametric space which is the lower boundary of a tree, based on a suitable
Chapman-Kolmogorov equation.

More or less at the same time as Kigami, {\sc Pearson and Bellisard}~\cite{PB} 
have introduced a ``Laplace-Beltrami'' operator on Cantor sets via trees and
so-called spectral triples and studied spectrum as well as the ``ddiffusion''
process associated with the operator.

When one compares several of these approaches with the simple and clear 
construction of
\cite{BGP1}, \cite{BGP2}, one may be astonished about the enormous notational 
and technical efforts undertaken to construct the respective processes of 
those references.
 
\section{Compact ultrametric spaces and trees}\label{sec:ultra-tree}

{\bf A. Ultrametric spaces}

A metric space $(X,d)$ is called \emph{ultrametric} if instead of the triangle
inequality it satisfies the stronger \emph{ultrametric inquality}
$$
d(\xi,\zeta) \le \max \{ d(\xi,\eta), d(\eta,\zeta)\} \quad \text{for all }\;
\xi, \eta, \zeta \in X\,.
$$
In these notes, we assume that our ultrametric space is \emph{compact}
and has \emph{no isolated points.} We recall that it is
totally disconnected, that every ball 
$$
B_d(\xi, r) = B(\xi, r) = \{ \eta \in X : d(\eta, \xi) \le r \}
$$
is open and compact, and that $B(\xi,r) = B(\eta,r)$ for every 
$\eta \in B(\xi,r)$. In particular, for any fixed $r > 0$, the collection
of all balls  with radius $r$ is an open cover of $X$. The set
$$
\La_d(\xi)  = \{d(\eta, \xi) : \eta \in X\,,\; \eta \ne \xi \}
$$
is countable, bounded by compactness, and has $0$ as its only
accumulation point. (It \emph{is} an accumulation point by the ``no 
isolated points'' assumption.) The set 
$\La_d = \bigcup_{\xi \in X} \La_d(\xi)$ has the same properties.

\bigskip

{\bf B. Infinite trees}

A tree is a connected graph $T$ without circles (closed paths of length 
$\ge 3$). We tacitly identify $T$ with its vertex set, which is assumed to be
infinite. We write $x \sim y$
if $x, y \in T$ are neighbours. For any pair of vertices
$x, y \in T$, there is a unique shortest path, called
\emph{geodesic segment}   
$$
\pi(x,y) = [x=x_0\,, x_1\,, \dots, x_k =y]
$$
such that $x_{i-1} \sim x_i$ and all $x_i$ are disctinct. 
If $x=y$ then this is the \emph{empty} or \emph{trivial} path. The
number $k$ is the  \emph{length} of the path (the graph distance 
between $x$ and $y$).

In $T$ we choose and fix a \emph{root vertex} $o$. We write $|x|$
for the length of $\pi(o,x)$. The choice of the root induces a partial order on
$T$, where $x \le y$ when $x \in \pi(o,y)$. Every $x \in T \setminus \{o\}$
has a unique \emph{predecessor} $x^-$, which is the unique neighbour of $x$ 
on $\pi(o,x)$. Thus, the set of all (unoriented) edges of $T$ is
$$
E(T) = \{[x^-,x] : x \in T\,,\; x \ne o \}\,.
$$
For $x \in T$, the number
$$
\deg^+(x) = |\{ y \in T: y^-=x\}|
$$ is the \emph{forward degree} of $x$. 
We assume that $T$ is \emph{locally finite,} that is, $\deg^+(x) < \infty$, and that it
has no \emph{dead ends,} that is $\deg^+(x) \ge 1$ for every $x \in T$.
Below, we shall even assume that $\deg^+(x) \ge 2$. 

A \emph{(geodesic) ray} in $T$ is a one-sided infinite path
$\pi = [x_0\,, x_1\,, x_2\,, \dots]$  such that $x_{n-1} \sim x_n$ and all 
$x_n$ are disctinct. Two rays are \emph{equivalent} if their symmetric
difference (as sets of vertices) is finite. An \emph{end} of $T$ is an 
equivalent class of rays. The set of all ends of $T$ is denoted $\partial T$.
This is the \emph{boundary} at infinity of the tree. For any $x \in T$ and 
$\xi \in \partial T$, there is a unique ray $\pi(x,\xi)$ which is a
representative of the end (equivalence class) $\xi$ and starts at $x$. 
We write 
$$
\wh T = T \cup \partial T. 
$$
For $x \in T$, the \emph{branch of $T$ rooted at $x$} is the
subtree $T_x$ that we identify with its set of vertices
$$
T_x = \{ y \in T : x \le y\}\,,
$$
so that $T_o=T$. We write $\partial T_x$ for the set of all ends of
$T$ which have a representative path contained in $T_x$, and
$\wh T_x = T_x \cup \partial T_x\,$.

For $w, z \in \wh T$, we define their \emph{confluent} 
$w \wedge z = w \wedge_o z$ with respect to the 
root $o$ by the relation 
$$
\pi(o,w \wedge z) = \pi(o,w) \cap \pi(o,z)\,.
$$
It is the last common element on the geodesics $\pi(o,w)$ and $\pi(o,w)$,
a vertex of $T$ unless $w=z \in \partial T$.

One of the best-known ways to define an ultrametric on $\wh T$ is
\begin{equation}\label{eq:tree-metr} 
d_e(z,w) = \begin{cases} 0\,,&\text{if}\; z=w\,,\\
e^{-|z \wedge w|} \,,&\text{if}\; z\ne w\,.
\end{cases}
\end{equation}
Then $\wh T$ is compact, and $T$ is open and dense. We are mostly interested
in the compact ultrametric space $\partial T$. In the metric $d_e$ of 
\eqref{eq:tree-metr}, each ball with centre $\xi \in \partial T$ is of the form
$\partial T_x$ for some $x \in \pi(o,\xi)$. Indeed, if we set
$\pi^+(o,x) = \{ x \in \pi(o,\xi) : \deg^+(x) \ge 2 \}$
then 
$$
\partial T_x = B_{d_e}(\xi,e^{-|x|}) \quad \text{for every}\; x \in \pi^+(o,x)\,,\AND
\La_{d_e}(x) = \{ e^{-|x|} : x \in \pi^+(o,\xi)\}\,.
$$
Also, $\xi$ is isolated if and only if $\pi_+(o,\xi)$ is finite.
Here, we shall mainly be interested in the opposite situation, 
when $\deg^+(x) \ge 2$ for  every $x$.
 
 \newpage

{\bf C. The tree associated with an ultrametric space}

We now start with a compact ultrametric space $(X,d)$ that does not possess
isolated points, and construct a tree $T$ as follows:
The vertex set of $T$ is
$$
\bigl\{ B_d(\xi, r) : \xi \in X\,,\; r > 0\bigr\}
$$
Here, we may assume (if we wish) that $r \in \La_d(\xi)$.

Given $\xi \in X$ and $r \in \La_d(\xi)$, take any 
$\eta \in B_d(\xi,r) = B_d(\eta,r)$. Since $\eta$ is not isolated, there must 
be $r_{\eta} \in \La_d(\eta)$ with $r_{\eta} < r$ and such that
$\La_d(\eta) \cap \bigl(r_{\eta}\,,\,r\bigr) = \emptyset$.
We call the ball $y = B_d(\eta,r_{\eta})$ a successor of $x = B_d(\xi,r)$. 
As vertices of our tree, this defines neighbourhood, where $x = y^-$.
The root vertex $o$ is $X$, the ball with maximal radius.
By compactness, each $x$ has only finitely many successors, and since
there are no isolated points in $X$, every vertex has at least 2 successors.

This defines the tree structure. For any $\xi \in X$, the
collection of all balls $B(\xi,r)$, $r \in \La_d(\xi)$, ordered decreasingly,
forms the set of vertices of a ray in $T$ that starts at $o$. 
Via a straightforward exercise, the mapping that associates to $\xi$ the end
of $T$ represented by that ray is a homeomorphism from $X$ onto $\partial T$. 
Thus, we can identify $X$ and $\partial T$ as ultrametric spaces.

In this identification, if originally a vertex $x$ was interpreted as
a ball $B(\xi,r)$, $r \in \La_d(\xi)$, then the set $\partial T_x$ of
ends of the branch $T_x$ just coincides with the ball $B(\xi,r)$.
That is, we are identifying each vertex $x$ of $T$ with the 
set $\partial T_x$.
 
If we start with an arbitrary locally finite tree and take its space of
ends as the ultrametric space $X$, then the above construction does not
recover vertices with forward degree $1$, so that in general we do not get 
back the tree we started with. However, via the above construction,
the correspondence between compact ultrametric spaces without isolated points
and locally finite rooted trees with forward degrees $\ge 2$ is bijective. 

From now on, we can abandon the notation $X$ for our ultrametric space.

\smallskip

\emph{We consider $X$ as the boundary 
$\partial T$ of a locally finite, rooted tree with forward degrees $\ge 2$.}

\smallskip

 At the end, we shall comment on 
how one can handle the presence of vertices with forward degree $1$.

\medskip

There are many ways to equip $\partial T$ with an ultrametric that has
the same topology and the same compact-open balls $\partial T_x\,$, $x \in T$,
possibly with different radii than in the standard metric \eqref{eq:tree-metr}.

\begin{dfn}\label{def:element} Let $T$ be a locally finite, rooted tree $T$ 
with $\deg^+(x) \ge 2$ for all $x$. An \emph{ultrametric element}
is a function $\phi: T \to (0\,,\infty)$ with
$$
\begin{aligned} 
&\text{(i)}\quad \phi(x^-) > \phi(x)\quad\text{for every}\; 
x \in T \setminus \{o\}\,,\\
&\text{(ii)}\quad \lim \phi(x_n) = 0 \quad\text{along every geodesic ray}\;
\pi = [x_0\,, x_1\,,x_2\,,\dots]\,.
\end{aligned}
$$
It induces the ultrametric $d_{\phi}$ on $\partial T$ given by
$$
d_{\phi}(\xi,\eta) = \begin{cases} 0\,,&\text{if}\; \xi=\eta\,,\\
\phi(\xi \wedge \eta) \,,&\text{if}\; \xi\ne \eta\,.
\end{cases}
$$
\end{dfn} 
The balls in this ultrametric are again the sets 
$$
\partial T_x = B_{d_{\phi}}\bigl(\xi, \phi(x)\bigr)\,,\quad 
\xi \in  \partial T_x\,.
$$
Note that condition (ii) in the definition is needed for having that
each end of $T$ is non-isolated in the metric $d_{\phi}\,$. The standard metric
$d_e$ is of course induced by $\phi(x) = e^{-|x|}$.

\begin{lem}\label{lem:dphi} For a tree as in Definition
\ref{def:element}, every ultrametric on $\partial T$ whose closed balls
are the sets $\partial T_x\,$, $x \in T$, is induced by an ultrametric 
element on $T$. 
\end{lem}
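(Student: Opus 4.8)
The plan is to start from an arbitrary ultrametric $d$ on $\partial T$ whose closed balls coincide with the sets $\partial T_x$, and reverse-engineer an ultrametric element $\phi$ from it. The key observation is that if the balls are exactly the $\partial T_x$, then the value $d(\xi,\eta)$ for distinct $\xi,\eta$ can depend only on their confluent $\xi\wedge\eta$. Indeed, fix a vertex $x$ and take any two distinct ends $\xi,\eta$ with $\xi\wedge\eta = x$. I would argue that $d(\xi,\eta)$ equals the radius $r$ of the smallest ball $\partial T_y$ that contains both $\xi$ and $\eta$; since $\xi\wedge\eta = x$ forces $y=x$, this radius is an intrinsic function of $x$ alone. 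So the plan is to \emph{define} $\phi(x)$ to be this common value $d(\xi,\eta)$, and then verify it is a legitimate ultrametric element inducing $d$.

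First I would make precise that the closed ball of radius $r$ about $\xi$ in the metric $d$ is, by hypothesis, one of the sets $\partial T_y$ with $\xi\in\partial T_y$; as the radius increases these balls form the nested family $\{\partial T_y : y\in\pi(o,\xi)\}$. The radius at which $\partial T_y$ first appears as a closed ball about $\xi$ is precisely $\max\{d(\xi,\zeta):\zeta\in\partial T_y\}$, and I would show this maximum is attained and equals $d(\xi,\eta)$ for any $\eta$ with $\xi\wedge\eta=y$. This pins down $\phi(y)$ unambiguously: it does not depend on the choice of representatives $\xi,\eta$ with confluent $y$. Establishing this well-definedness is the crux, and it rests on the ultrametric inequality, which forces $d(\xi,\zeta)\le\phi(y)$ for all $\zeta\in\partial T_y$ with equality whenever $\xi\wedge\zeta=y$ exactly.

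Next I would check the two defining properties of Definition \ref{def:element}. Property (i), the strict monotonicity $\phi(x^-)>\phi(x)$, follows because $\partial T_x \subsetneq \partial T_{x^-}$ as closed balls: the larger ball has strictly larger radius, and I must rule out equality, which again uses that $\partial T_{x^-}$ contains ends $\zeta$ with $\xi\wedge\zeta = x^-$, giving $d(\xi,\zeta)=\phi(x^-)$ strictly exceeding every $d$-distance within $\partial T_x$. Here I use the standing assumption $\deg^+\ge 2$, which guarantees such a sibling branch under $x^-$ distinct from $T_x$ actually exists. Property (ii), that $\phi(x_n)\to 0$ along every ray, follows from the fact that $d$ metrizes the given topology on $\partial T$ in which every end is non-isolated: the balls $\partial T_{x_n}$ shrink to the single end $\xi$, so their $d$-radii must tend to $0$.

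Finally, with $\phi$ in hand, I would confirm that $d_\phi=d$ by direct comparison: for distinct $\xi\ne\eta$ with confluent $y=\xi\wedge\eta$, the construction gives $d_\phi(\xi,\eta)=\phi(y)=d(\xi,\eta)$ by definition of $\phi$. \textbf{The main obstacle} I anticipate is the well-definedness argument in the second paragraph -- showing that $d(\xi,\eta)$ genuinely depends only on $\xi\wedge\eta$ and that the relevant maximum over a branch is attained at the ``right'' value $\phi(x)$ rather than some larger interior distance. This requires a careful use of the ultrametric inequality together with the hypothesis that the \emph{closed} balls are \emph{exactly} the $\partial T_x$ (with nothing in between), so that the radius function on the nested chain of branches is forced to match the confluent structure. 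The monotonicity in (i) and the vanishing in (ii) should then be comparatively routine consequences.
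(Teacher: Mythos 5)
Your proposal is correct and takes essentially the same route as the paper: the paper simply defines $\phi(x) = \diam(\partial T_x)$ (the $d$-diameter), which is exactly the common value $d(\xi,\eta)$ over pairs with $\xi\wedge\eta = x$ that you construct, so your well-definedness step and the paper's (unstated) verification that $d_\phi = d$ are the same argument with the work relocated. The paper likewise obtains property (i) from the proper inclusion $\partial T_x \subsetneq \partial T_{x^-}$ of closed balls, guaranteed by $\deg^+ \ge 2$, and property (ii) from the non-isolation of ends.
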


\begin{proof} Given an ultrametric $d$ as stated, we set 
$\phi(x) = \diam(\partial T_x)$, the diameter with respect to the metric $d$.
Since $\deg^+(x^-) \ge 2$ for any $x \in T \setminus \{o\}$,
the ball $\partial T_{x^-}$ is the disjoint union of at least two balls
$\partial T_y$ with $y^-=x^-$. Therefore 
we must have  $\diam(\partial T_x) < \diam(\partial T_{x^-})$, and property
(i) holds. Since no end is isolated, $\phi$ satisfies (ii). It is now
straighforward that $d_{\phi} = d$.
\end{proof}

In view of this correspondence, in the sequel we shall replace the subscript
$d$ referring to the metric $d = d_{\phi}$ by the subscript $\phi$ referring
to the ultrametric element.
We note that 
\begin{equation}\label{eq:range}
\diam_{\phi}(\partial T) = \phi(o)\,,\quad
\La_{\phi}(\xi) = \{ \phi(x) : x \in \pi(o,\xi) \} \AND
\La_{\phi} = \{ \phi(x) : x \in T \}.
\end{equation}
We also note here that for any $\xi \in \partial T$ and $x \in \pi(o,\xi)$,
\begin{equation}\label{eq:balls} 
B_{\phi}(\xi,r) =  \begin{cases} \partial T_x\, &\text{for}\; \phi(x) \le r <
\phi(x^-)\,,\; \text{if}\; x \ne o\\
\partial T&\text{for}\; r \ge \phi(o)\,,\; \text{if}\; x = o\,.
\end{cases}\end{equation}

\section{Stochastic processes on ultrametric spaces and on trees}
\label{sec:processes}

{\bf A. Isotropic jump processes on a compact ultrametric space}

We start with a compact ultrametric space $(X,d)$ without isolated points.
We realise $X$ as $\partial T$, where $T$ is a rooted, locally finite
tree with forward degrees $\ge 2$. By Lemma \ref {lem:dphi}, we can represent
$d = d_{\phi}$ for the associated ultrametric element $\phi$ on $T$.
 
We choose and fix a Borel probability measure $\mu$ on $X = \partial T$
with $\supp(\mu) = \partial T$, and choose another probability measure 
$\sigma$ on the non-negative
reals $\R_+$ whose distribution function 
$F_{\sigma}(r) = \sigma\bigl( [0\,,\,r)\bigr)$
has the following properties:\footnote{In \cite{BGP1} and \cite{BGP2}, 
the measure $\sigma$ is denoted
$c$, while $\mu$ is denoted $m$. We avoid this, because it is in slight
conflict with part of our notation, which is imported from the book
\cite{W-Markov}.}
\begin{equation}\label{eq:sigma}
F_{\sigma}(0+) = 0\,,\;  F_{\sigma}\bigl(\phi(o)\bigr) < 1\,, \quad 
\text{and it is strictly increasing on each}\; \La_{\phi}(\xi),\;\xi \in \partial T.
\end{equation}
The measure $\mu$ is fixed throughout, while we shall ``play'' with
$\sigma$ as well as with the ultrametric $d$ (resp. its ultrametric element
$\phi$).

{\sc Bendikov, Grigor'yan and Pittet}~\cite{BGP2} (see also \cite{BGP1})
have introduced a class of stochastic
processes on $X$ in the following way: on $L^2(\partial T,\mu)$, define
\begin{equation}\label{eq:average}
P_rf(\xi) = \frac{1}{\mu(B_d(\xi,r))} \int_{B_d(\xi,r)} f\,d\mu
\end{equation}
and the transition operator
\begin{equation}\label{eq:isotropic1}
Pf(\xi) = \int_{\R^+} P_rf(\xi)\,d\sigma(r)\,.
\end{equation}
This is a self-adjoint, bounded Markov operator. In view of the identity
$P_rP_s = P_{\max\{r,s\}}\,$, we can construct 
the powers $P^t$, $t > 0$, of $P$ 
\begin{equation}\label{eq:isotropic2}
P^tf(\xi) = \int_{\R^+} P_rf(\xi)\,d\sigma^t(r)\,,
\end{equation}
where $\sigma^t$ is the probability distribution on $\R^+$ with
$F_{\sigma^t}(r) = F_{\sigma}(r)^t\,$. 
Then $(P^t)_{t > 0}$ is a continuous Markov semigroup,
which is Fellerian. It gives rise to a Hunt process $(X_t)_{t \ge 0}\,$ on
our ultrametric space. We call it the \emph{$(\phi,\mu,\sigma)$-process} on
$\partial T$.
Its properties and an elegant and refined analysis are the core of 
\cite{BGP2}
and will not be repeated here in detail. In view of its definition,
we can call the process isotropic: informally speaking,
its transition kernel is equidistributed
on each sphere. The conditions \eqref{eq:sigma} on the measure $\sigma$
are natural in order to guarantee that the process is irreducible, i.e.,
that it can reach every open set from any starting point with positive
probability. 
In more detail, let $\xi \in \partial T$ and 
$\pi(0,\xi) = [o=x_0\,,x_1\,,x_2\,,\dots]$. Then, using \eqref{eq:balls}, 
\begin{equation}\label{eq:Pt}
\begin{gathered}
P^tf(\xi)= \sum_{n=0}^{\infty} c_n^t \, P_{\phi(x_n)}f(\xi)\,,\\\text{where}\quad 
c_0^t = \sigma^t\bigl([\phi(x_0)\,,\,\infty)\bigr) \AND 
c_n^t = \sigma^t\bigl([\phi(x_{n-1})\,,\,\phi(x_n)\bigr) \; \text{ for}\; n \ge 1\,.
\end{gathered}
\end{equation}
In particular, for arbitrary $y \in T$,
$$
\Prob[X_t \in \partial T_y \mid X_0=\xi] 
= \sum_{n=0}^{\infty} c_n^t \, 
\frac{\mu(\partial T_{x_n} \cap \partial T_y)}{\mu(\partial T_{x_n})}.
$$
The assumption that $F_{\sigma}\bigl(\phi(o)\bigr) < 1$
serves to guarantee that the process can exit with positive probability from each 
$\partial T_x\,$, where $x\sim o$.

We see that we have some freedom in the choice of the measure $\sigma$:
any two measures whose distribution functions coincide on the value set 
$\La_{\phi}$ of $d_{\phi}$ give rise to the same process. 

\begin{dfn}\label{def:standard} The \emph{standard process} associated with $\mu$ and
$d$ (resp., with the associated ultrametric element $\phi$) is the one where
$\sigma=\sigma_*$ is the ``inverse exponential distribution'' 
whose distribution function is
$$
\sigma_*\bigl([0\,,\,r)\bigr) = e^{-1/r}\,,\quad r > 0\,.
$$
\end{dfn}

\begin{lem}\label{lem:standard} Every process defined via \eqref{eq:average}
and \eqref{eq:isotropic1} is the standard process with respect to $\mu$ and
a suitably modified ultrametric element $\phi_*$.
\end{lem}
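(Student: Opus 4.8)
The plan is to produce the modified element $\phi_*$ explicitly and then to reduce the coincidence of the two processes to the observation, already visible in \eqref{eq:Pt}, that the $(\phi,\mu,\sigma)$-process depends on the pair $(\phi,\sigma)$ only through the numbers $F_\sigma(\phi(x))$, $x\in T$. Since the standard distribution satisfies $F_{\sigma_*}(r)=e^{-1/r}$, the requirement $F_{\sigma_*}\bigl(\phi_*(x)\bigr)=F_\sigma\bigl(\phi(x)\bigr)$ forces
$$
\phi_*(x)=\frac{1}{-\log F_\sigma\bigl(\phi(x)\bigr)}\,,\qquad x\in T\,,
$$
and I would take this as the definition of $\phi_*$.

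First I would check that $\phi_*$ is $(0,\infty)$-valued, i.e.\ that $0<F_\sigma\bigl(\phi(x)\bigr)<1$ for every $x$. The upper bound follows from monotonicity of the distribution function together with $\phi(x)\le\phi(o)$ (a consequence of Definition \ref{def:element}(i)) and the assumption $F_\sigma\bigl(\phi(o)\bigr)<1$ in \eqref{eq:sigma}. For the lower bound I would use that, by Definition \ref{def:element}(ii), the values in $\La_\phi(\xi)$ accumulate at $0$, so below any fixed $\phi(x)$ there is a smaller element $\phi(x_m)\in\La_\phi(\xi)$; strict monotonicity of $F_\sigma$ on $\La_\phi(\xi)$ then gives $F_\sigma\bigl(\phi(x)\bigr)>F_\sigma\bigl(\phi(x_m)\bigr)\ge 0$.

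Next I would verify that $\phi_*$ is an ultrametric element in the sense of Definition \ref{def:element}. Property (i) amounts to $\phi_*(x^-)>\phi_*(x)$: both $\phi(x^-)$ and $\phi(x)$ lie in $\La_\phi(\xi)$ for any $\xi\in\partial T_x$, and $\phi(x^-)>\phi(x)$, so the strict-monotonicity clause of \eqref{eq:sigma} yields $F_\sigma\bigl(\phi(x^-)\bigr)>F_\sigma\bigl(\phi(x)\bigr)$; composing with the strictly increasing map $t\mapsto 1/(-\log t)$ on $(0,1)$ gives (i). Property (ii) holds because $\phi(x_n)\to 0$ along a ray forces $F_\sigma\bigl(\phi(x_n)\bigr)\to F_\sigma(0+)=0$, whence $\phi_*(x_n)\to 0$.

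It then remains to identify the processes. By construction $F_{\sigma_*}\bigl(\phi_*(x)\bigr)=F_\sigma\bigl(\phi(x)\bigr)$, and therefore $F_{\sigma_*^t}\bigl(\phi_*(x)\bigr)=F_\sigma\bigl(\phi(x)\bigr)^t=F_{\sigma^t}\bigl(\phi(x)\bigr)$ for every $x$ and $t$. Since the balls $\partial T_{x_n}$ along $\pi(o,\xi)$ are unchanged and the coefficients $c_n^t$ of \eqref{eq:Pt} are differences of exactly such values along the ray, the two expansions of $P^t$ agree; as $\mu$ is kept fixed, the $(\phi_*,\mu,\sigma_*)$-process is the given $(\phi,\mu,\sigma)$-process. (The pair $(\phi_*,\sigma_*)$ indeed satisfies \eqref{eq:sigma}, since $e^{-1/r}$ is strictly increasing on $(0,\infty)$, tends to $0$ as $r\to0+$, and equals $F_\sigma\bigl(\phi(o)\bigr)<1$ at $\phi_*(o)$.) I expect the one genuinely load-bearing point to be the well-definedness and admissibility of $\phi_*$: both the positivity $F_\sigma\bigl(\phi(x)\bigr)>0$ and the strict inequalities in property (i) rest entirely on the strict-monotonicity hypothesis in \eqref{eq:sigma}, so that assumption is the crux of the argument rather than a technicality.
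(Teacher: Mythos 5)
Your proposal is correct and follows essentially the same route as the paper: define $\phi_*(x) = -1\big/\log F_{\sigma}\bigl(\phi(x)\bigr)$ so that $F_{\sigma_*}\bigl(\phi_*(x)\bigr) = F_{\sigma}\bigl(\phi(x)\bigr)$, check via \eqref{eq:sigma} that $\phi_*$ is an ultrametric element, and conclude from the expansion \eqref{eq:Pt} that the semigroups coincide. Your write-up merely fills in the verifications (positivity of $F_{\sigma}\bigl(\phi(x)\bigr)$, properties (i) and (ii) of Definition \ref{def:element}) that the paper compresses into the phrase ``By \eqref{eq:sigma}, this is an ultrametric element.''
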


\begin{proof}
Let the original ultrametric be $d = d_{\phi}\,$. Thus $\diam(X) =
\diam(\partial T) = \phi(o)$, and by assumption,
$\sigma\bigl((0\,,\,\phi(o)\bigr) < 1$.
We define 
\begin{equation}\label{eq:phi*}
\phi_*(x) = -1\big/\log F_{\sigma}\bigl( \phi(x) \bigr)\,.
\end{equation}
By \eqref{eq:sigma}, this is an ultrametric element on $T$, and
for any $x\in T$
$$
F_{\sigma}\bigl( \phi(x) \bigr) = F_{\sigma_*}\bigl( \phi_*(x) \bigr) 
$$
Looking at \eqref{eq:Pt}, we see that $P_t$ is  induced by $\mu$, $\phi_*$ 
and $\sigma_*$ equally as by $\mu$, $\phi$ and $\sigma$, as required.
\end{proof}
  
One of the key features of \cite{BGP2} is an explicit formula for the infinitesimal
generator and the Dirichlet form of the Markov semigroup $(P_t)$.
Here, we shall have no need to go into details regarding the theory
of Dirichlet forms; see the standard reference {\sc Fukushima, Oshima and
Takeda}~\cite{FOT}. What will be important for
us is that the Dirichlet form determines the process, resp. the semigroup.

We start with the standard process one associated with $\mu$ and $\phi$.
In \cite{BGP2}, the following kernel $J=J_{\phi,\mu}$ is introduced.
\begin{equation}\label{eq:J}
\begin{aligned}
J(\xi,\eta) &= \int_{0}^{1/d_{\phi}(\xi,\eta)}  
   \frac{dt}{\mu\bigl( B_{\phi}(\xi,1/t)\bigr)} \\
&= \frac{1}{\phi(o)} + \int_{1/\phi(o)}^{1/\phi(\xi\wedge\eta)}
   \frac{dt}{\mu\bigl( B_{\phi}(\xi,1/t)\bigr)}\,,\quad \xi, \eta \in \partial T\,.
\end{aligned}
\end{equation}
We have $J(\xi,\eta) = J(\eta,\xi)$.
By \cite{BGP2}, 
the Dirichlet form 
$\Dir$ of the standard $(\phi,\mu)$-process is given, for functions 
$\varphi, \psi \in C(\partial T)$, by
\begin{equation}\label{eq:Dir}
\Dir_{\partial T}(\varphi, \psi) = \frac12 \int_{\partial T} \int_{\partial T} 
\Bigl(u(\xi)-u(\eta)\Bigr) \Bigl(v(\xi)-v(\eta)\Bigr) \, J(\xi,\eta) 
\,d\mu(\xi)\,d\mu(\eta).
\end{equation}
If we start with $\phi$, $\mu$ and a general measure $\sigma$ as in
\eqref{eq:sigma}, then we first represent the associated process as
the standard $(\phi_*,\mu)$-process, where $\phi_*$ is
as in\eqref{eq:phi*}. Then the Dirichlet form of the $(\phi,\mu,\sigma)$-process
is the one of \eqref{eq:Dir} with $J = J_{\phi_*,\,\mu}$ in the place of 
$J_{\phi,\mu}\,$. 

The infinitesimal generator $\Lap$ is given by
$\Lap u(\xi) = \int_{\partial T} \Bigl(u(\xi)-u(\eta)\Bigr)\,J(\xi,\eta) 
\,d\mu(\xi)\,,
$
but this will not be used here.
  
\bigskip

{\bf B. Nearest neighbour random walks on trees}

A good part of the material outlined in this subsection is taken
from the author's (little read) book \cite{W-Markov}. An older, recommended 
reference is the seminal paper of {\sc Cartier}~\cite{Ca}.

A \emph{nearest neighbour random walk} on the locally finite, infinite tree $T$ is 
induced by its stochastic transition matrix $P= \bigl(p(x,y)\bigr)_{x,y \in T}\,$
with the property that $p(x,y) > 0$ if and only if $x \sim y$. 
The resulting discrete-time Markov chain (random walk) is written $(Z_n)_{n\ge 0}\,$.
Its $n$-step transition probabilities
$p^{(n)}(x,y) = \Prob_x[Z_n=y]$, $x, y \in T$, are the elements of the
$n^{\text{th}}$ power of the matrix $P$. The notation $\Prob_x$ refers
to the probability measure on the trajectory space that governs the
random walk starting at $x$. We assume that the random walk is
\emph{transient,} i.e., with probability $1$ it visits any finite set only 
finitely often. Thus, $0 < G(x,y) < \infty$ for all $x,y \in X$,
where
$$
G(x,y) = \sum_{n=0}^{\infty} p^{(n)}(x,y)
$$ 
is the \emph{Green kernel} of the random walk. In addition, we shall also 
make crucial use of the quantities
$$
F(x,y) = \Prob_x[ Z_n =y \; \text{for some} \; n \ge 0]
\AND U(x,x) = \Prob_x[ Z_n =x \; \text{for some} \; n \ge 1]
\,.
$$
We shall need several identities relating them and start with a few of them,
valid for all $x,y \in T$.
\begin{align}
G(x,y) &= F(x,y) G(y,y)\label{eq:FG}\\
G(x,x) &=\frac{1}{1-U(x,x)}\label{eq:GU}\\
U(x,x) &=\sum_{y} p(x,y) F(y,x)\label{eq:UF}\\ 
F(x,y) &=F(x,z)F(z,y)\quad\text{whenever }\; z \in \pi(x,y)\label{eq:FF}
\end{align} 
The first three hold for arbitrary denumerable Markov chains, while
\eqref{eq:UF} is specific for trees (resp., a bit more generally, when $z$ is
a ``cut point'' between $x$ and $y$).
The identities show that those quantities can be determined just by all the
$F(x,y)$, where $x \sim y$. More identities, as to be found in 
\cite[Chapter 9]{W-Markov}, will be displayed and used later on.
By transience, the random walk $Z_n$ must converge to a random end, a simple and
well-known fact. See e.g. \cite{Ca} or \cite[Theorem 9.18]{W-Markov}.

\begin{lem}\label{lem:conv} There is a $\partial T$-valued random variable
$Z_{\infty}$ such that for every starting point $x \in T$,
$$
\Prob_x[Z_n \to Z_{\infty} \;\text{in the topology of}\;\wh T] =1.
$$
\end{lem}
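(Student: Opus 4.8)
The plan is to exploit transience together with the cut-point structure of the tree: in $T$, the only way to pass from a branch $T_v$ to the rest of the tree is through the predecessor $v^-$, so once a low-level vertex is visited for the last time the trajectory is trapped inside a single branch, and these branches nest to produce a limiting ray.

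First I would record two consequences of transience. Since $T$ is locally finite, each ball $B_k = \{x \in T : |x| \le k\}$ is finite, and by transience the walk visits every vertex of $B_k$ only finitely often; hence, $\Prob_x$-almost surely, $(Z_n)$ lies in $B_k$ only finitely often. Consequently $|Z_n| \to \infty$ and the last-exit time $\tau_k = \sup\{ n : |Z_n| \le k\}$ is finite almost surely, for every $k$. From now on I fix a trajectory in this full-measure event (which is the same event under every $\Prob_x$).

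The combinatorial core is the trapping claim. For $n > \tau_k$ we have $|Z_n| \ge k+1$; set $n_0 = \tau_k + 1$ and $v_{k+1} = Z_{n_0}$, a vertex of level $k+1$. I claim $Z_n \in T_{v_{k+1}}$ for all $n \ge n_0$. Indeed, the only edge of $T$ leaving the branch $T_{v_{k+1}}$ is the one joining $v_{k+1}$ to $v_{k+1}^-$; were the walk to leave $T_{v_{k+1}}$ for the first time at some $n_1 > n_0$, this would force $Z_{n_1} = v_{k+1}^-$, a vertex of level $k$, contradicting $|Z_{n_1}| \ge k+1$. Thus after time $\tau_k$ the trajectory is confined to the single branch $T_{v_{k+1}}$.

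It remains to assemble the end and check convergence. For $n$ beyond both $\tau_k$ and $\tau_{k+1}$ we have $Z_n \in T_{v_{k+1}} \cap T_{v_{k+2}}$, so these branches are nested, and since $|v_{k+2}| = |v_{k+1}| + 1$ this yields $v_{k+2}^- = v_{k+1}$. Hence $[o = v_0, v_1, v_2, \dots]$ is a geodesic ray and defines an end $Z_{\infty} \in \partial T$; being an explicit function of the trajectory through the $\tau_k$ and the $Z_n$, it is measurable, and it is the same function under every $\Prob_x$. Finally, for $n > \tau_k$ both $Z_n$ and $Z_{\infty}$ lie in $T_{v_{k+1}}$, so $|Z_n \wedge Z_{\infty}| \ge k+1$ and therefore $d_e(Z_n, Z_{\infty}) \le e^{-(k+1)}$; letting $k \to \infty$ gives $Z_n \to Z_{\infty}$ in the topology of $\wh T$. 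I expect the only real subtlety to be the cut-point step of the third paragraph: this is exactly where the tree hypothesis (absence of cycles) is used, and where transience is converted, via finiteness of $\tau_k$, into the eventual trapping that forces convergence — the passage to the topological statement through $d_e$ is then routine.
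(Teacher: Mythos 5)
Your proof is correct, and it takes a more constructive route than the paper's. The paper argues by contradiction through accumulation points: since $\wh T$ is compact, every trajectory accumulates somewhere in $\wh T$; transience pushes all accumulation points into $\partial T$; and if a trajectory had two distinct accumulation points $\xi\ne\eta$, the nearest-neighbour property would force it to visit the cut vertex $\xi\wedge\eta$ infinitely often, which transience excludes. Your argument relies on exactly the same cut-point mechanism (a branch $T_v$ can only be exited through the edge $[v^-,v]$), but instead of compactness plus contradiction you build the limit directly: finiteness of the last-exit times $\tau_k$, trapping in a branch $T_{v_{k+1}}$ after time $\tau_k$, nesting of these branches into a ray, and the explicit estimate $d_e(Z_n,Z_\infty)\le e^{-(k+1)}$ for $n>\tau_k$. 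What your version buys: $Z_\infty$ is exhibited as an explicit measurable function of the trajectory (measurability is left implicit in the paper's sketch), no compactness of $\wh T$ is invoked, and you get a quantitative rate of convergence. What the paper's version buys: brevity, since compactness does the work of your nesting construction.

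One small point to patch, not a genuine gap: for $k<|x|$ the set $\{n:|Z_n|\le k\}$ may be empty (the walk starting at $x$ need not ever visit $B_k$), so $\tau_k$, and hence $v_{k+1}=Z_{\tau_k+1}$, is then undefined. This costs nothing: either run the construction only for $k\ge|x|$ (an end is determined by the tail of its ray, and the initial segment of the ray is then the geodesic $\pi(o,v_{|x|+1})$), or define $v_{k+1}$ as the level-$(k+1)$ ancestor of $Z_{n_0}$ with $n_0=\max(\tau_k+1,0)$; in either case the trapping argument goes through verbatim, since leaving $T_{v_{k+1}}$ would force a visit to a vertex of level $k$.
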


In brief, the argument is as follows: by transience, random walk trajectories must
accumulate at $\partial T$ almost surely. If such a trajectory had two distinct
accumulation points, say $\xi$ and $\eta$, then by the nearest neighbour property,
the trajectory would visit the vertex $\xi\wedge\eta$ infinitely often, which
can occur only with probability $0$.

We can consider the family of \emph{limit distributions} $\nu_x\,$, $x \in T$,
where for any Borel set $B \in \partial T$,
$$
\nu_x(B) = \Prob_x[Z_{\infty} \in B]\,.
$$
The sets $\partial T_y\,$, $y \in T$, form a semiring that generates the
Borel $\sigma$-algebra of $\partial T$. Thus, each $\nu_x$ is determined by
the values of those sets. There is an explicit formula, compare with \cite{Ca} 
or \cite[Proposition 9.23]{W-Markov}. For $y \ne o$, 
\begin{equation}\label{eq:nu}
\nu_x(\partial T_y) = \begin{cases}
F(x,y) \dfrac{1-F(y,y^-)}{1- F(y^-,y)F(y,y^-)}\,,
&\text{if}\; x \in \{y\} \cup (T \setminus T_y)\,,\\[12pt]
1-F(x,y) \dfrac{F(y,y^-)-F(y^-,y)F(y,y^-)}{1- F(y^-,y)F(y,y^-)}\,,  
&\text{if}\; x \in T_y\,.
\end{cases}
\end{equation}
A \emph{harmonic function} is a function $h:T \to \R$ with $Ph = h$,
where
$$
Ph(x)=\sum_y p(x,y)h(y)\,.
$$
For any Borel set $B \subset \partial T$, the function $x \mapsto \nu_x(B)$
is a bounded harmonic function. One deduces that all $\nu_x$ are comparable:
$p^{(k)}(x,y) \, \nu_x \le \nu_y\,$, where $k$ is the length of $\pi(x,y)$.
Thus, for any $\varphi \in L^1(\partial T, \nu_o)$, the function $h_\varphi$ 
defined by
$$
h_{\varphi}(x) = \int_{\partial T} \varphi\,d\nu_x
$$
is finite and harmonic on $T$. It is often called the \emph{Poisson transform}
of $\varphi$. 

We next define a measure $\msf$ on $T$ via its atoms: 
$\msf(o)=1$, and for $x \in T \setminus \{o\}$ with 
$\pi(o,x) =[o=x_0\,, x_1\,, \dots, x_k=x]$,
\begin{equation}\label{eq:treerev}
\msf(x) = \frac{p(x_0,x_1)p(x_1,x_2) \cdots p(x_{k-1},x_k)}
{p(x_1,x_0)p(x_2,x_1)\cdots p(x_k,x_{k-1})}\,.
\end{equation}
Then for all $x,y \in T$,
\begin{equation}\label{eq:reversible}
\msf(x) p(x,y) = \msf(y) p(y,x)\,, \quad \text{and consequently}\quad
\msf(x) G(x,y) = \msf(y) G(y,x)\,; 
\end{equation}
the random walk is \emph{reversible}. This would allow us to use the 
\emph{electrical network} interpretation of $(T,P,\msf)$, for which there
are various references: see e.g. {\sc Yamasaki}~\cite{Ya},
{\sc Soardi}~\cite{So}, or -- with notation
as used here -- \cite[Chapter 4]{W-Markov}. We do not go into its details here;
each edge $e= [x^-,x] \in E(T)$ is thought of as an electic conductor with
\emph{conductance}
$$
a(x^-,x) = \msf(x) p(x,x^-).
$$ 
We get the Dirichlet form $\Dir_T = \Dir_{T,P}$ for functions $f, g: T \to \R$,
defined by 
\begin{equation}\label{eq:Dir-T}
\Dir_T(f,g) = \sum_{[x^-,x]\in E(T)} \bigl(f(x)-f(x^-)\bigr) 
\bigl(g(x)-g(x^-)\bigr) \, a(x^-,x)\,.
\end{equation}
It is well defined for $f, g$ in the space
\begin{equation}\label{eq:D(T)}
\Dcal(T)=\Dcal(T,P) = \{ f:  T \to\R \mid \Dir_T(f,f) < \infty\}.
\end{equation}
We are interested in the subspace 
$$
\HD(T)=\HD(T,P) = \{ h \in \Dcal(T,P): Ph =h < \infty\}
$$
of hamonic functions with \emph{finite power.} The terminology comes
from the interpretation of such a function as the potential of an
electric flow (or current), and then $\Dir_T(h,h)$ is the power of that
flow.\footnote{In the mathematical literature, mostly the expression ``energy'' is
used for $\Dir_T(h,h)$, but the author's modest understanding of Physics induces 
him to think that ``power'' is more appropriate.} 

Every function in $\HD(T,P)$ is the Poisson transform of
some function $\varphi \in L^1(\partial T,\nu_o)$. This is valid not only for trees, but
for general finite range reversible Markov chains,
and follows from the following facts. (1) Every function in
$\HD$ is the difference of two non-negative functions in $\HD$. (2) Every
non-negative function in $\HD$ can be approximated, monotonically from below, by
a sequence of non-negative bounded functions in $\HD$. (3) Every bounded harmonic
function (not necessarily with finite power) is the Poisson transform of a 
bounded function on the boundary. In the general setting, the latter is the 
(active part of) the Martin boundary, with $\nu_x$ being the limit distribution
of the Markov chain, starting from $x$, on that boundary.  (1) and (2) are
contained in \cite{Ya} and \cite{So}, while (3) is part of general Martin 
boundary theory, see e.g. \cite[Theorem 7.61]{W-Markov}.

Thus, we can introduce a form $\Dir_{\HD}$ on $\partial T$ by setting
\begin{equation}\label{eq:DirHD} 
\begin{gathered}
\Dcal(\partial T,P) = \{ \varphi \in L^1(\partial T,\nu_o) : 
\Dir_T(h_{\varphi},h_{\varphi}) < \infty\}\,,\\
\Dir_{\HD}(\varphi, \psi) = \Dir_T(h_{\varphi},h_{\psi}) \quad \text{for}\; 
\varphi, \psi \in \Dcal(\partial T,P).
\end{gathered}
\end{equation}
{\sc Kigami}~\cite{Ki} elaborates an expression for 
this form by considerable ``bare hands'' effort,
shows its regularity properties and then studies the process on $\partial T$
induced by this Dirichlet form. We call this the \emph{K-process} (``K'' for
``Kigami'') associated with the random walk. 

Now, there is a simple expression for
$\Dir_{\HD}\,.$ We define the \emph{Na\"\i m kernel} on 
$\partial T \times \partial T$ by
\begin{equation}\label{eq:Naim}
\Theta_o(\xi,\eta) = \begin{cases}
\dfrac{\msf(o)}{G(o,o)F(o,\xi\wedge\eta) F(\xi\wedge\eta,o)}\,,
&\text{if}\; \xi \ne \eta\,,\\
+\infty\,,&\text{if}\; \xi =\eta\,.
\end{cases}
\end{equation}
In our case, $\msf(o)=1$, but we might want to change the base point,
or normalise the measure $\msf$ in a different way.

\begin{thm}\label{thm:doob-naim}
For any transient nearest neighbour random walk on the tree $T$
with root $o$, and all functions $\varphi$, $\psi$ in $\Dcal(\partial T,P)$,
\begin{equation}\label{eq:doob-naim}
\Dir_{\HD}(\varphi,\psi) = 
\frac12 \int_{\partial T} \int_{\partial T} 
\Bigl(\varphi(\xi)-\varphi(\eta)\Bigr) \Bigl(\psi(\xi)-\psi(\eta)\Bigr)\Theta_o(\xi,\eta) 
\,d\nu_o(\xi)\,d\nu_o(\eta)\,.
\end{equation}
\end{thm}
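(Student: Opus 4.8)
The plan is to transform each side of \eqref{eq:doob-naim} into the \emph{same} sum indexed by the vertices of $T$, and then read off equality. Both sides are symmetric and bilinear in $(\varphi,\psi)$, and finite linear combinations of indicators $\mathbf 1_{\partial T_a}$, $a\in T$, are dense in $\Dcal(\partial T,P)$ for the energy norm; so it suffices in the end to verify the identity on such a generating family, the extension to general $\varphi,\psi\in\Dcal(\partial T,P)$ following from continuity of both forms together with the approximation facts (1)--(3) recalled before \eqref{eq:DirHD}. The computation itself I would carry out for general $\varphi,\psi$, reducing to indicators only where the algebra becomes heavy.

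For the right-hand side the key observation is that $\Theta_o(\xi,\eta)$ depends on $\xi,\eta$ only through the confluent $v=\xi\wedge\eta$; write $\Theta_o(v)$ for its value there, and note that $F(o,o)=1$ gives $\Theta_o(o)=\msf(o)/G(o,o)$. I would partition the off-diagonal of $\partial T\times\partial T$ according to $v$: if $v$ has children $v_1,\dots,v_m$ (with $m=\deg^+(v)\ge 2$), then $\{\xi\wedge\eta=v\}=\bigcup_{i\ne j}\partial T_{v_i}\times\partial T_{v_j}$, and $\Theta_o$ is constant there. The diagonal is harmless: the factor $(\varphi(\xi)-\varphi(\eta))$ annihilates the singularity $\Theta_o=+\infty$, and the diagonal is $(\nu_o\otimes\nu_o)$-null. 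Expanding the product and carrying out the elementary integrations over each $\partial T_{v_i}\times\partial T_{v_j}$ collapses the inner sum to a ``between--children'' second moment, so that the right-hand side of \eqref{eq:doob-naim} becomes
\[
\sum_{v\in T}\Theta_o(v)\Bigl(Q(v)-\sum_{u\,:\,u^-=v}Q(u)\Bigr),\qquad
Q(u)=\nu_o(\partial T_u)\!\int_{\partial T_u}\!\varphi\psi\,d\nu_o-\Bigl(\int_{\partial T_u}\!\varphi\,d\nu_o\Bigr)\Bigl(\int_{\partial T_u}\!\psi\,d\nu_o\Bigr),
\]
the (unnormalised) covariance of $\varphi,\psi$ over the branch $\partial T_u$. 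A summation by parts, using $\Theta_o(u)=\Theta_o(u^-)\big/\bigl(F(u^-,u)F(u,u^-)\bigr)$ (which follows from \eqref{eq:FF}), rewrites this as $\Theta_o(o)Q(o)+\sum_{u\ne o}\bigl(\Theta_o(u)-\Theta_o(u^-)\bigr)Q(u)$.

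For the left-hand side I would compute the energy by a discrete Green's identity. Truncating $T$ to the ball $\{|x|\le n\}$ and summing by parts along edges, harmonicity of $h_\varphi$ kills every interior contribution (including at $o$), leaving only the flux through the sphere $S_n=\{|x|=n\}$:
\[
\Dir_T(h_\varphi,h_\psi)=\lim_{n\to\infty}\sum_{w\in S_n}a(w^-,w)\bigl(h_\varphi(w)-h_\varphi(w^-)\bigr)\,h_\psi(w).
\]
Since $h_\varphi$ is harmonic, the current $a(y^-,y)(h_\varphi(y)-h_\varphi(y^-))$ through the edge $[y^-,y]$ equals the flux of $h_\varphi$ across $S_n$ inside the branch $T_y$ for every $n\ge|y|$; hence these currents are the values $\Phi_\varphi(\partial T_y)$ of a finite signed measure $\Phi_\varphi$ on $\partial T$, and, because $h_\psi(w)\to\psi$ radially $\nu_o$-a.e.\ (Fatou/martingale convergence of the Poisson transform), the limit above is $\int_{\partial T}\psi\,d\Phi_\varphi$. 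To evaluate the edge currents I would use the Poisson-kernel recursion $h_\varphi(u)=F(u,u^-)h_\varphi(u^-)+\bigl(1-F(u^-,u)F(u,u^-)\bigr)s_\varphi(u)/F(o,u)$, where $s_\varphi(u)=\int_{\partial T_u}\varphi\,d\nu_o$; this follows from the Martin-kernel formula $d\nu_x/d\nu_o(\xi)=F(x,x\wedge\xi)/F(o,x\wedge\xi)$, itself read off from \eqref{eq:nu} and \eqref{eq:FF}.

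The heart of the proof --- and the step I expect to be the main obstacle --- is matching these two vertex-indexed expressions, i.e.\ showing that $\int\psi\,d\Phi_\varphi$ equals $\Theta_o(o)Q(o)+\sum_{u\ne o}(\Theta_o(u)-\Theta_o(u^-))Q(u)$. This is where reversibility \eqref{eq:reversible} (to convert the conductances $a(u^-,u)=\msf(u)p(u,u^-)$ and the weights $\msf$ into the $F$- and $G$-quantities of $\Theta_o$) and the factorisations \eqref{eq:FG}--\eqref{eq:FF} are all consumed, and where the normalisation $\msf(o)/G(o,o)$ and the exact coefficients $\Theta_o(u)-\Theta_o(u^-)$ must emerge. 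I would nail this by specialising to $\varphi=\mathbf 1_{\partial T_a}$, $\psi=\mathbf 1_{\partial T_b}$, where $\Phi_\varphi(\partial T_b)=a(b^-,b)\bigl(\nu_b(\partial T_a)-\nu_{b^-}(\partial T_a)\bigr)$ is given explicitly by \eqref{eq:nu}, reducing the matching to a finite case analysis on the relative position of $a$ and $b$. Besides this algebraic core, the remaining care is analytic bookkeeping: the interchange of the edge sum with the double boundary integral (Tonelli, with absolute convergence guaranteed by $h_\varphi,h_\psi\in\HD(T,P)$ and Cauchy--Schwarz), the vanishing of the spherical remainder as $n\to\infty$, and the density argument extending the identity from indicators to all of $\Dcal(\partial T,P)$.
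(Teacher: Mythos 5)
Your skeleton matches the paper's (verify \eqref{eq:doob-naim} on indicators of branches, then extend by bilinearity and density), but your two key devices are genuinely different: the paper makes the right-hand side trivial by moving the root to the confluent vertex, via its Lemma \ref{lem:invariant} on invariance of $\Theta_o\,d\nu_o\,d\nu_o$ under change of base point, and it computes the left-hand side by restricting the network to the branches and using the reproducing identity \eqref{eq:Green}; you instead keep the root fixed, resolve the right-hand side over confluent vertices into branch covariances (that decomposition, the relation $\Theta_o(u)=\Theta_o(u^-)\big/\bigl(F(u^-,u)F(u,u^-)\bigr)$, and the Abel summation are all correct), and attack the left-hand side by a flux argument. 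Your target formula for indicators, namely $\Dir_T(h_\varphi,h_\psi)=a(b^-,b)\bigl(h_\varphi(b)-h_\varphi(b^-)\bigr)$ for $\psi=\uno_{\partial T_b}$, is correct and consistent with the paper's Case 1 computation via its Lemma \ref{lem:nu}.

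However, there is a genuine gap exactly where you locate ``the heart''. The passage from $\Dir_T(h_\varphi,h_\psi)=\lim_n\sum_{w\in S_n}a(w^-,w)\bigl(h_\varphi(w)-h_\varphi(w^-)\bigr)h_\psi(w)$ (your spheres $S_n=\{|x|=n\}$) to $\int_{\partial T}\psi\,d\Phi_\varphi$ is not established by what you cite. First, the node law only makes $\Phi_\varphi$ \emph{finitely} additive on the semiring of branches; countable additivity and finite total variation, which you need both to speak of a ``finite signed measure'' and to run any dominated-convergence argument, are neither proved nor automatic for finite-energy currents. Second, and decisively, Fatou/martingale convergence gives $h_\psi\to\psi$ along $\nu_o$-almost every ray, but in your sphere sums $h_\psi(w)$ is weighted by the edge currents, not by $\nu_o$; almost-everywhere convergence with respect to $\nu_o$ gives no control of sums weighted by the currents, and at Dirichlet-irregular boundary points -- which the theorem does not exclude -- $h_\psi$ need not converge to $\psi$ at all. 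What actually closes this step is a different pair of facts: $\uno_{T_b}-h_\psi\in\Dcal_0(T)$ (the Royden decomposition of the vertex-set indicator $\uno_{T_b}$, whose energy sits on the single edge $[b^-,b]$), together with the orthogonality $\Dir_T(u,h)=0$ for all $u\in\Dcal_0(T)$ and $h\in\HD(T)$; these give $\Dir_T(h_\varphi,h_\psi)=\Dir_T(h_\varphi,\uno_{T_b})=a(b^-,b)\bigl(h_\varphi(b)-h_\varphi(b^-)\bigr)$ with no limit interchange whatsoever. Neither ingredient appears in your proposal. Finally, even granting that formula, the matching of your two vertex-indexed expressions -- which you yourself flag as the main obstacle -- is only announced (``a finite case analysis''), not performed; since that computation is the actual content of the theorem, the proposal as it stands does not prove \eqref{eq:doob-naim}.
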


There is a general definition of the Na\"\i m kernel \cite{Na} that involves the
Martin boundary, which in the present case is $\partial T$. 
A proof of Theorem \eqref{eq:doob-naim} is given in \cite{Do} in a setting of abstract
potential theory on Green spaces, which are locally Euclidean. The definition of
\cite{Na} refers to the same type of setting. Now,
infinite networks, even when seen as metric graphs, are not locally Euclidean.
In this sense, so far the definition of the kernel and a proof of  
\eqref{eq:doob-naim} for transient, reversible random walks
have not been available in the literature. Indeed, such a proof in the network
setting is highly desirable for the large audience who might not be so well
familiar with abstract potential theory in the style of the 1960s.
In a forthcoming paper, {\sc Georgakopoulos and Kaimonvich}~\cite{GK} provide those
``missing links''. Nevertheless, in the appendix, we shall give a direct and 
simple proof of Theorem \ref{thm:doob-naim} for the specific case of trees.\footnote{As a matter of fact, based on a
mysterious remark in {\sc Kaimanovich}~\cite{Ka}, the author had initiated to
elaborate this proof in the late 1990s, but in the phase of transition from
Italy to Austria, the hand-written notes were lost and the project was not 
pursued until now.}

\section{The duality between random walks on $T$ and jump processes
on $\partial T$}\label{sec:duality}

When looking at the isotropic jump processes of \cite{BGP2}  on $\partial T$,
as described in \S~\ref{sec:processes}.{\bf A}, and at the ones introduced in 
\cite{Ki}, outlined
at the end of \S~\ref{sec:processes}.{\bf B}, it is natural to ask the following two
questions.

\medskip

(I) Given a transient random walk on $T$, does the K-process on $\partial T$
induced by the form $\Dir_{\HD}$ of \eqref{eq:DirHD} arise as one of the isotropic 
jump processes described in \S~\ref{sec:processes}.{\bf A} with respect to the measure 
$\mu = \nu_o$ on $\partial T$, some ultrametric element $\phi$ on $T$
and a suitable measure $\sigma$ on $\R^+$ ?

\medskip

(II) Conversely, given $\mu$, $\phi$ and $\sigma$, is there a random walk on $T$
with limit distribution $\nu_o=\mu$ such that the jump process induced by
$\mu$, $\phi$ and $\sigma$ is the K-process with Dirichlet form $\Dir_{\HD}$ ?

\medskip

Before answering both questions in the next two subsections, we need to specify the
assumptions more precisely. When starting with $(\phi,\mu,\sigma)$, it is natural
to assume that $\mu$ is supported by the whole of $\partial T$, as we have
assumed in \S~\ref{sec:processes}.{\bf A}. Indeed, otherwise the state space of the 
$(\phi,\mu,\sigma)$-process would reduce to the ultrametric sub-space $\supp(\mu)$
of $\partial T$, and as long as $\supp(\mu)$ is required to have no isolated points,
we can invoke \S~\ref{sec:ultra-tree}.{\bf C} to build up an associated tree with forward degrees $\ge 2$
directly from $\supp(\mu)$. 

Thus, on the side of the random walk, we also want that $\supp(\nu_o) = \partial T$.
This is equivalent with the requirement that $\nu_o(\partial T_x) > 0$ for every 
$x$. By \eqref{eq:nu} this is in turn equivalent with 
\begin{equation}\label{eq:F<1} F(x,x^-) <1 \quad
\text{for every}\; x \in T \setminus \{o\}.
\end{equation} 
Indeed, we shall see that we need a bit more, namely that 
\begin{equation}\label{eq:Green-0}
\lim_{x \to \infty} G(x,o) = 0\,.
\end{equation}
(I.e., for every $\ep > 0$ there is a finite set $F \subset T$ such that
$G(x,o) < \ep$ for all $x \in T \setminus F$.) This condition is necessary and
sufficient for solvability of the \emph{Dirichlet problem:} for any $\varphi \in C(\partial X)$,
its Poisson transform $h_{\varphi}$ provides the unique continuous extension
of $\varphi$ to $\wh T$ which is harmonic in $T$. See e.g. 
\cite[Corollary 9.44]{W-Markov}.

We shall restrict attention to random walks with properties \eqref{eq:F<1}
and \eqref{eq:Green-0} on a rooted tree with forward degrees $\ge 2$.

\bigskip

{\bf A. Answer to question (I)}
 
We start with a random walk that fulfills the above requirements.
We know from Lemma \ref{lem:standard}
that each $(\mu,\phi,\sigma)$-process arises as the standard process with respect
to a modification $\phi_*$ of $\phi$. Thus, we can eliminate $\sigma$
from our considerations by just looking for an ultrametric element $\phi$
such that the K-process is the standard process on $\partial T$
associated with $(\nu_0\,,\phi)$. 

Since the processes are determined by the Dirichlet forms, we infer from
\eqref{eq:Dir} and \eqref{eq:doob-naim} that we are looking for $\phi$
such that $J_{\phi,\nu_o}(\xi,\eta) = \Theta(\xi,\eta)$ for all 
$\xi, \eta \in \partial T$ with $\xi \ne \eta$. This becomes
\begin{equation}\label{eq:tosolve1}
\frac{1}{\phi(o)} + \int_{1/\phi(0)}^{1/\phi(\xi\wedge\eta)} 
\frac{dt}{\nu_o(B_{\phi}(\xi,1/t))} = 
\frac{1}{G(o,o)F(o,\xi\wedge\eta) F(\xi\wedge\eta,o)}.
\end{equation}
First of all, since $\deg^+(o) \ge 2$, there are
$\xi,\eta\in \partial T$ such that $\xi \wedge \eta=o$.
We insert these two boundary points in \eqref{eq:tosolve1}. Since $F(o,o)=1$, 
we see that we must have
$$
\phi(o) = G(o,o)\,.
$$
Now take $x \in T\setminus \{o\}$. Since forward degrees are $\ge 2$, there are
$\xi,\eta,\eta' \in \partial T$ such that $\xi \wedge \eta=x$ and
$\xi \wedge \eta'=x^-$. We write \eqref{eq:tosolve1} first for $(\xi,\eta')$ and 
then for $(\xi,\eta)$ and then take the difference, leading to
the equation 
\begin{equation}\label{eq:tosolve2}
\int_{1/\phi(x^-)}^{1/\phi(x)} \frac{dt}{\nu_o(B_{\phi}(\xi,1/t))} = 
\frac{1}{G(o,o)F(o,x) F(x,o)} - \frac{1}{G(o,o)F(o,x^-) F(x^-,o)}.
\end{equation}
By \eqref{eq:balls}, within the range of the last integral we must have
$B_{\phi}(\xi,1/t) = \partial T_x\,$, whence that integral reduces to
$$
\left(\frac{1}{\phi(x)} - \frac{1}{\phi(x^-)}\right)\frac{1}{\nu_o(\partial T_x)} 
$$
We multiply equation \eqref{eq:tosolve2} by $\nu_o(\partial T_x)$ and 
simplify the resulting right hand side 
$$
\left(\frac{1}{G(o,o)F(o,x) F(x,o)} - \frac{1}{G(o,o)F(o,x^-) F(x^-,o)}\right)
\nu_o(\partial T_x)
$$
by use the identities \eqref{eq:FG} -- \eqref{eq:FF} and the first of
the two formulas of \eqref{eq:nu} (for $\nu_o$). We obtain that the ultrametric
element that we are looking for should satisfy
\begin{equation}\label{eq:solution1}
\frac{1}{\phi(x)} - \frac{1}{\phi(x^-)} = \frac{1}{G(x,o)} - \frac{1}{G(x^-,o)}
\quad \text{for every}\; x \in T \setminus \{o\}\,.
\end{equation}
This determines $1/\phi(x)$ recursively, and 
we get
$$
\phi(x) = G(x,o)\,.
$$  
Since $G(x,o) = F(x,x^-)G(x^-o)$ by \eqref{eq:FG} and \eqref{eq:FF},
the assumptions \eqref{eq:F<1} and \eqref{eq:Green-0} yield that  $\phi$
is indeed an ultrametric element. Tracing back the
last computations, we find that with this choice of $\phi$,
we have indeed that $J(\xi,\eta) = \Theta(\xi,\eta)$ for all 
$\xi, \eta \in \partial T$
with $\xi \ne \eta$. We have proved the following.

\begin{thm}\label{thm:I} Let $T$ be a locally finite, rooted tree with forward degrees
$\ge 2$, and consider a transient nearest neighbour random walk on $T$ that
satisfies \eqref{eq:FG} and \eqref{eq:FF}. 

Then the K-process on $\partial T$ induced by the Dirichlet form 
\eqref{eq:DirHD} $\equiv$ \eqref{eq:doob-naim} coincides with the
standard process associated with ultrametric element $\phi = G(\cdot,o)$ and
the limit distribution $\nu_o$ of the
random walk.
\end{thm}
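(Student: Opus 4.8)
The plan is to use the principle that a symmetric Markov process is determined by its Dirichlet form, so that the two processes coincide precisely when their forms do. Taking $\mu=\nu_o$, the form $\Dir_{\HD}$ of the K-process (the Doob--Na\"\i m formula \eqref{eq:doob-naim}) and the form $\Dir_{\partial T}$ of the standard process \eqref{eq:Dir} have the identical bilinear shape $\tfrac12\iint(\cdots)(\cdots)\,K\,d\nu_o\,d\nu_o$, differing only in the kernel $K$. Hence the whole theorem reduces to exhibiting an ultrametric element $\phi$ for which $J_{\phi,\nu_o}(\xi,\eta)=\Theta_o(\xi,\eta)$ for all $\xi\ne\eta$, and then recognising the right candidate as $\phi=G(\cdot,o)$.

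First I would note that both kernels factor through the confluent $v=\xi\wedge\eta$. For $\Theta_o$ this is immediate from \eqref{eq:Naim}. For $J_{\phi,\nu_o}$ it follows from \eqref{eq:balls}: as $t$ runs over $[1/\phi(o),1/\phi(v)]$, the ball $B_\phi(\xi,1/t)$ runs precisely through the sets $\partial T_{x}$ for $x$ on the geodesic $\pi(o,v)$, so the defining integral depends on $\xi$ only through $v$. Evaluating $J=\Theta$ at a confluent equal to $o$ (possible since $\deg^+(o)\ge2$) and using $F(o,o)=1$ then forces the initial value $\phi(o)=G(o,o)$.

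Next I would extract a recursion by differencing along edges. For $x\ne o$, choosing boundary points with confluents $x$ and $x^-$ (available because forward degrees are $\ge2$) and subtracting the two instances of $J=\Theta$ isolates the single integral over $[1/\phi(x^-),1/\phi(x)]$, on which $B_\phi(\xi,1/t)=\partial T_x$ by \eqref{eq:balls}; the left-hand side thus collapses to $\bigl(1/\phi(x)-1/\phi(x^-)\bigr)/\nu_o(\partial T_x)$, giving \eqref{eq:tosolve2}. Multiplying through by $\nu_o(\partial T_x)$ leaves a purely combinatorial identity on the right. This is the step I expect to be the main obstacle: one must simplify the product of $\nu_o(\partial T_x)$ with the difference of Na\"\i m values down to $1/G(x,o)-1/G(x^-,o)$. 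The computation is delicate but routine, combining the cut-point multiplicativity $F(o,x)=F(o,x^-)F(x^-,x)$ and $F(x,o)=F(x,x^-)F(x^-,o)$ from \eqref{eq:FF}, the relation $G(x,o)=F(x,o)G(o,o)$ from \eqref{eq:FG}, and the first-case formula of \eqref{eq:nu} for $\nu_o(\partial T_x)$; the common factor $1-F(x^-,x)F(x,x^-)$ cancels against the denominator of $\nu_o(\partial T_x)$, and one is left exactly with \eqref{eq:solution1}.

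Finally I would integrate the recursion \eqref{eq:solution1}: telescoping from the root with $\phi(o)=G(o,o)$ yields $1/\phi(x)=1/G(x,o)$, hence $\phi=G(\cdot,o)$. It remains to verify that this is a genuine ultrametric element. Monotonicity (i) follows from $G(x,o)=F(x,x^-)G(x^-,o)$ together with the standing hypothesis \eqref{eq:F<1} that $F(x,x^-)<1$, while the decay (ii) along every ray is precisely the Dirichlet-regularity condition \eqref{eq:Green-0}. Reading the differencing step backwards then confirms that this $\phi$ indeed gives $J_{\phi,\nu_o}=\Theta_o$ everywhere off the diagonal, so the two Dirichlet forms, and hence the two processes, coincide.
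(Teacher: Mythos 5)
Your proposal is correct and follows essentially the same route as the paper's own proof: reduce to matching the kernels $J_{\phi,\nu_o}=\Theta_o$, pin down $\phi(o)=G(o,o)$ via a confluent at the root, difference along edges to obtain \eqref{eq:tosolve2} and hence the recursion \eqref{eq:solution1}, telescope to $\phi=G(\cdot,o)$, and verify the ultrametric-element axioms from \eqref{eq:F<1} and \eqref{eq:Green-0} before tracing the computation backwards. Even the key cancellation you flag (the factor $1-F(x^-,x)F(x,x^-)$ against the denominator of $\nu_o(\partial T_x)$) is exactly the simplification the paper performs using \eqref{eq:FG}--\eqref{eq:FF} and the first formula of \eqref{eq:nu}.
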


\begin{rmk}\label{rmk:sigma}
Given the random walk on $T$ and the associated $K$-process on $\partial T$,
we might want to realise it at the $(\nu_o, \phi, \sigma)$-process
for an ultrametric element $\phi$ different from $G(\cdot,o)$.
This means that we have to look for a suitable measure $\sigma$ on $\R^+$.
Let us now write $\phi_*(x) = G(x,o)$.
In view of Lemma \ref{lem:standard} and its proof, we are looking
for $\sigma$ such that for our given generic $\phi$,
$$
F_{\sigma}\bigl(\phi(x)\bigr) = e^{-1/G(x,o)}.
$$
For this it is necessary that $\phi(x)=\phi(y)$ whenever $G(x,o)=G(y,o)$: we need
$\phi$ to be constant on equipotential sets.
In that case, $F_{\sigma}(r)$ is determined by the above equation for $r$ in
the value set $\La_{\phi}$ of the ultrametric $d_{\phi}\,$. 
We can ``interpolate'' that function in an arbitrary
way (monotone increasing, left continuous) and get a feasible measure $\sigma$.   
\end{rmk}

\bigskip

{\bf B. Answer to question (II)}
   
Answering question (II) means that we start with $\phi$ and $\mu$
and then look for a random walk with limit distribution $\nu_o=\mu$
such that the standard $(\phi,\mu)$-process is the K-process associated with
the random walk. We know from Theorem \ref{thm:I} that in this case,
we should have $\phi(x) = G(x,o)$, whence in particular, $\phi(o) > 1$. 
Thus we cannot expect that every $\phi$ is suitable. The most natural
choice is to replace $\phi$ by $C \cdot\phi$ for some constant $C > 0$.
For the standard processes associated with $\phi$ and $C\cdot\phi$, respectively,
this just gives rise of a linear time change: if the old process
is $(X_t)$, then the new one is $(X_{t/C})$.

\begin{thm}\label{thm:II} Let $T$ be a locally finite, rooted tree with forward degrees
$\ge 2$, and consider an ultrametric element $\phi$ on $T$ and a fully supported
probability measure $\mu$ on $\partial T$. 

Then there are a unique constant
$C > 0$ and a unique transient nearest neighbour random walk on $T$ that
satisfies \eqref{eq:FG} and \eqref{eq:FF} with the following properties:
$\mu = \nu_o\,$ the limit distribution of the random walk, and
the associated K-process on $\partial T$ coincides with the
standard process induced by the ultrametric element $C\cdot\phi$ and
the given measure~$\mu$.  
\end{thm}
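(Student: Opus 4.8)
The plan is to reduce everything, via Theorem \ref{thm:I}, to the construction of a single random walk and then to build that walk by hand from its first-passage probabilities. By Theorem \ref{thm:I}, a transient walk satisfying \eqref{eq:F<1} and \eqref{eq:Green-0} answers question (II) precisely when its limit distribution is $\nu_o = \mu$ and its Green function satisfies $G(\cdot,o) = C\cdot\phi$ for some $C>0$: indeed the process determines its Dirichlet form, hence the kernel $J$ of \eqref{eq:J} and the reference measure, and (inverting \eqref{eq:J} exactly as in the proof of Theorem \ref{thm:I}) $J$ together with $\mu$ determines the ultrametric element, so matching the $K$-process with the standard $(C\phi,\mu)$-process forces these two conditions. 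Since $G(x,o) = F(x,x^-)\,G(x^-,o)$ by \eqref{eq:FG} and \eqref{eq:FF}, the requirement $G(\cdot,o) = C\phi$ forces, independently of $C$, the upward passage probabilities $F(x,x^-) = \phi(x)/\phi(x^-) =: u_x$, which lie in $(0,1)$ because $\phi$ is an ultrametric element; and then necessarily $C = G(o,o)/\phi(o)$. Thus existence and uniqueness both come down to showing that the $u_x$ together with $\mu$ determine a unique admissible walk.

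Next I would recover the downward passage probabilities $w_x := F(x^-,x)$ from $\mu$. Inserting the known $u_y$ into \eqref{eq:nu} and demanding $\nu_o(\partial T_y) = \mu(\partial T_y)$, I process the vertices in order of increasing $|y|$: at each step the condition reduces to a single scalar equation whose solution is $w_y = K/\bigl(1 - u_y(1-K)\bigr)$, where $K$ is built from already-determined data and the ratio $\mu(\partial T_y)/\mu(\partial T_{y^-})$. The strict inequality $\mu(\partial T_y) < \mu(\partial T_{y^-})$, valid since $\deg^+ \ge 2$ and $\supp(\mu) = \partial T$, forces $0 < K < 1$ and hence $0 < w_y < 1$. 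This determines all $w_x$ uniquely.

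With the $u_x,w_x \in (0,1)$ in hand I would reconstruct the walk explicitly. Set $g_x := \tfrac{1}{1-u_x w_x} + \sum_{z^-=x}\tfrac{u_z w_z}{1-u_z w_z}$ (dropping the first summand when $x=o$), a finite positive number, and put $p(x,x^-) := \tfrac{u_x}{g_x(1-u_x w_x)}$ and $p(x,z) := \tfrac{w_z}{g_x(1-u_z w_z)}$ for each child $z$ of $x$. These are positive, and the identity $\sum_{z^-=x}\mu(\partial T_z)=\mu(\partial T_x)$, rewritten through the $w$'s, is exactly what makes them sum to $1$ at every vertex; on a tree the resulting nearest-neighbour matrix is automatically reversible.

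It then remains to check that this $P$ genuinely realises the prescribed data, and here lies the main obstacle, since one must avoid arguing in a circle (the $w_x$ were defined so as to force \eqref{eq:nu}, so one cannot invoke \eqref{eq:nu} again to conclude). The clean route is to verify \emph{directly} from the definition of $g_x$ that $\phi$ solves the Green equation $(I-P)\phi = \tfrac{\phi(o)}{g_o}\,\delta_o$, that $\phi \ge 0$, and that $\phi(x)\to 0$ as $x\to\infty$; the last point follows from axiom (ii) of an ultrametric element together with local finiteness, since the superlevel sets $\{\phi \ge \varepsilon\}$ are finite subtrees by König's lemma. The mere existence of such a nonnegative superharmonic solution forces transience (a recurrent irreducible chain admits no nonconstant nonnegative superharmonic function, hence no nonnegative solution of $(I-P)H=\delta_o$), and the maximum principle forces a nonnegative harmonic function vanishing at infinity to be zero, giving uniqueness; hence $G(\cdot,o) = \tfrac{g_o}{\phi(o)}\,\phi = C\phi$, so \eqref{eq:Green-0} holds and $F(x,x^-) = G(x,o)/G(x^-,o) = u_x$ as required. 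Finally I would identify the walk's own downward passage probabilities with the $w_x$ by an induction outward from the root, using the intrinsic first-step recursion for $F(x^-,x)$ and the just-established $F(z,z^-)=u_z$; feeding $F(x,x^-)=u_x$ and $F(x^-,x)=w_x$ back into \eqref{eq:nu} yields $\nu_o(\partial T_y) = \mu(\partial T_y)$ on the generating semiring $\{\partial T_y\}$, whence $\nu_o=\mu$. Uniqueness of both $C$ and the walk is then immediate, since every quantity above was forced with no remaining freedom.
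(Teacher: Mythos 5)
Your proposal follows, in essence, the same route as the paper's own proof: force $F(x,x^-)=\phi(x)/\phi(x^-)$ from $G(\cdot,o)=C\phi$, recover the downward quantities $w_x=F(x^-,x)$ recursively from $\mu$ (your formula $w_y=K/\bigl(1-u_y(1-K)\bigr)$ with $K=\mu(\partial T_y)/\wt F(o,y^-)$ is exactly \eqref{eq:solveF2}), define $\wt G(x,x)$ by \eqref{eq:formula2} and $p(x,y)$ by \eqref{eq:formula1}, check stochasticity against the identity $\sum_{z^-=x}\mu(\partial T_z)=\mu(\partial T_x)$, verify the Green equation, and conclude by transience of a positive non-constant superharmonic function, the Riesz decomposition, and the maximum principle, using that $\phi\to 0$ at infinity.

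Two points deserve attention. First, a concrete slip: at the root, ``dropping the first summand'' gives $g_o=\sum_{z^-=o}\frac{u_zw_z}{1-u_zw_z}$, whereas \eqref{eq:formula2} requires $\wt G(o,o)=1+\sum_{z^-=o}\frac{u_zw_z}{1-u_zw_z}$. With your $g_o$ one gets $\sum_{z}p(o,z)=(1+g_o)/g_o>1$, so $P$ fails to be stochastic at $o$, and your constant $C=g_o/\phi(o)$ disagrees with the correct value \eqref{eq:C}. The fix is simply to note that $\frac{1}{1-u_xw_x}=1+\frac{u_xw_x}{1-u_xw_x}$, so that removing the parent term at $o$ must still leave the summand $1$; your own stochasticity criterion would have flagged this once the computation was carried out. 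Second, a structural difference at the end: the paper proves the Green equation $P\wt G(\cdot,x_0)=\wt G(\cdot,x_0)-\uno_{x_0}$ for \emph{every} $x_0$, so that $\wt G\equiv G$ and hence $\wt F\equiv F$ follow at once, and $\nu_o=\mu$ is read off by comparing \eqref{eq:re-mu} with \eqref{eq:nu}. You verify it only at $x_0=o$, which buys an immediate ``vanishing at infinity'' (since $C\phi\to0$ along every ray by axiom (ii) and K\"onig's lemma), but then obliges you to run the extra outward induction identifying the walk's actual $F(x^-,x)$ with your $w_x$ before \eqref{eq:nu} may be invoked. That induction does close: the first-step recursion at $x^-$, with sibling terms $p(x^-,z)u_z$ and parent term $p(x^-,x^{--})w_{x^-}$, reduces to $F(x^-,x)=\bigl(\tfrac{w_x}{1-u_xw_x}\bigr)\big/\bigl(1+\tfrac{u_xw_x}{1-u_xw_x}\bigr)=w_x$, essentially the same algebra as the paper's Claim 2 — so your ending is correct, only slightly longer than the paper's.
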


For the proof, we shall need three more formulas. The first two are taken
from \cite[Lemma 9.35]{W-Markov}, while the third is immediate from
\eqref{eq:nu} and \eqref{eq:FF}
\begin{align}
G(x,x)\,p(x,y) &= \frac{F(x,y)}{1- F(x,y)F(y,x)}\,
\;\;\text{if}\;y \sim x\,, \AND\label{eq:formula1}\\
G(x,x) &= 1 + \sum_{y: y \sim x} \frac{F(x,y)F(y,x)}{1- F(x,y)F(y,x)}
\label{eq:formula2}\\
F(x^-,x) &= \frac{\nu_o(\partial T_x)/F(o,x^-)}
{1- F(x,x^-) + F(x,x^-)\,\,\nu_o(\partial T_x)/F(o,x^-)}.\label{eq:formula3}
\end{align}

\begin{proof}{\bf Proof of Theorem \ref{thm:II}}

We proceed as follows: we start with $\phi$ and $\mu$ and
replace $\phi$ by a new ultrametric element $C\cdot \phi$, with $C$ to be determined,
and $\mu$ being the candidate for the limit distribution of the random walk that
we are looking for.

Using the various formulas
at our disposal, we first construct in the only possible way the 
quantities $F(x,y)\,$, $x,y,\in T$, in particular when $x \sim y$.
In turn, they lead to the Green kernel $G(x,y)$. So far, these will
be only ``would-be'' quantities whose feasability will have to be verified.
Until that verification, we shall denote them $\wt F(x,y)$ and $\wt G(x,y)$.
Via \eqref{eq:formula1}, they will lead to definitions of transition
probabilities $p(x,y)$. Stochasticity of the resulting transition matrix $P$
will also have to be verified. 

Only then, we will use a potential theoretic argument to show that
$\wt G(x,y)$ really is the Green kernel associated with $P$, so that the ``?''
can be removed.  

\smallskip

First of all, in view of Theorem \ref{thm:I}, we must have
$$
C \cdot \phi(x) = \wt G(x,o)\,, 
$$
whence by \eqref{eq:FG} and \eqref{eq:FF} 
\begin{equation}\label{eq:solveF1}
\wt F(x,x^-) = \phi(x)/\phi(x^-) \quad \text{for}\; x \in T \setminus \{o\}\,,
\end{equation}
and more generally
$$
\wt F(y,x) = \phi(y)/\phi(x)\quad \text{when} \; x \le y\,.
$$  
We note immediately that $0 < \wt F(y,x) < 1$ when $x < y$, and that $\wt F(x,x)=1$.

Next, we use \eqref{eq:formula3} to construct recursively $\wt F(x^-,x)$ and
$\wt F(o,x)$. We start with $\wt F(o,o) = 1$.
If $x \ne o$ and $\wt F(o,x^-)$ is already given, with $\mu(\partial T_{x^-})\le 
\wt F(o,x^-) \le 1$ (the lower bound is required by \eqref{eq:nu}),  
then we have to set 
\begin{equation}\label{eq:solveF2}
\wt F(x^-,x) = \frac{\mu(\partial T_x)/\wt F(o,x^-)}
{1- \wt F(x,x^-) + \wt F(x,x^-)\,\,\mu(\partial T_x)/\wt F(o,x^-)}
\end{equation}
and $\wt F(o,x) = \wt F(o,x^-) \wt F(x^-,x)\,$.
Since $\wt F(o,x^-) \ge \mu(\partial T_{x^-}) \ge \mu(\partial T_x)$,
we see that $0 < \wt F(x^-,x) \le 1$.  We set -- as imposed by \eqref{eq:FF} --
$\wt F(o,x) = \wt F(o,x^-)\wt F(x^-,x)$. Formula \eqref{eq:solveF2} (re-)transforms into
\begin{equation}\label{eq:re-mu}
\mu(\partial T_x) = \wt F(o,x^-)\wt F(x^-,x)
\frac{1-\wt F(x,x^-)}{1-\wt F(x,x^-)\wt F(x^-,x)} \le \wt F (o,x) \le 1\,, 
\end{equation}
as needed for our recursive construction.
Now we have all $\wt F(x,y)$, initially for $x \sim y$, and consequently for
all $x,y$ by taking products along geodesic paths.

We now can compute the constant $C$: \eqref{eq:formula2}, combined with
\eqref{eq:solveF1} and \eqref{eq:re-mu} for $x \sim o$ forces 
$$
\begin{aligned}
C \phi(o) &= \wt G(o,o) 
=  1 + \sum_{x: x \sim o} \frac{\wt F(o,x)\wt F(x,o)}{1- \wt F(o,x)\wt F(x,o)}
&=1 + \sum_{x: x \sim o} \frac{\wt F(x,o)}{1- \wt F(x,o)}\mu(\partial T_x)\\
&= 1 + \sum_{x: x \sim o} \frac{\phi(x)/\phi(o)}{1- \phi(x)/\phi(o)}
\mu(\partial T_x) 
\end{aligned}
$$
Therefore 
\begin{equation}\label{eq:C}
C = \frac{1}{\phi(o)} + \sum_{x: x \sim o} \frac{\phi(x)/\phi(o)}{\phi(o)- \phi(x)}
\mu(\partial T_x)\,. 
\end{equation} 
We now construct $\wt G(x,x)$ via \eqref{eq:formula2}:
\begin{equation}\label{eq:Gxx}
\wt G(x,x) = 1 + \sum_{y: y \sim x} \frac{\wt F(x,y)\wt F(y,x)}{1- \wt F(x,y)\wt F(y,x)}.
\end{equation}
For $x = o$, we know that this is compatible with our choice of $C$. 
At last, our only choice for the Green kernel is
$$
\wt G(x,y) = \wt F(x,y) \wt G(y,y)\,, \quad x,y \in T\,.
$$
Now we finally arrive at the only way how to define the transition probabilites,
via \eqref{eq:formula1}: 
\begin{equation}\label{eq:pxy}
p(x,y) =\frac1{\wt G(x,x)} \frac{\wt F(x,y)}{1- \wt F(x,y)\wt F(y,x)}.
\end{equation}
\emph{Claim 1.}\quad $P$ is stochastic.

\smallskip

\noindent \emph{Proof of Claim 1.}
Combining \eqref{eq:pxy} with  \eqref{eq:Gxx}, we deduce that we have to verify
that for every $x \in T$,
\begin{equation}\label{eq:verify}
\sum_{y:y\sim x} \frac{\wt F(x,y)(1-\wt F(y,x))}{1- \wt F(x,y)\wt F(y,x)} = 1\,.
\end{equation}
If $x=o$, then by \eqref{eq:re-mu} this is just 
$\sum_{y:y\sim o} \mu(\partial T_y) = 1$.
If $x \ne o$ then, again by \eqref{eq:re-mu}, the left hand side of 
\eqref{eq:verify} is
$$
\begin{aligned}
\sum_{y:y^- =x} &\frac{\wt F(y^-,y)(1-\wt F(y,y^-))}{1- \wt F(y^-,y)\wt F(y,y^-)}
 + \frac{\wt F(x,x^-)(1-\wt F(x^-,x))}{1- \wt F(x,x^-)\wt F(x^-,x)}
 \\
&= \sum_{y:y^- = x} \frac{\mu(\partial T_y)}{\wt F(o,x)}
+ 1 - \frac{1-\wt F(x^-,x)}{1- \wt F(x,x^-)\wt F(x^-,x)}
= 1.
\end{aligned}
$$
This proves Claim 1.

\smallskip
\noindent\emph{Claim 2.} For any $x_0\in T$, the function 
$\tilde g_{x_0}(x) = \wt G(x,x_0)$ satisfies $P\tilde g_{x_0} = \tilde g_{x_0} -\uno_{x_0}\,$.

\smallskip

\noindent \emph{Proof of Claim 2.}
First, we combine \eqref{eq:Gxx} with \eqref{eq:pxy} to combine
$$
P\tilde g_{x_0}(x_0) = \sum_{y:y \sim x_0} p(x_0,y)\wt F(y,x_0) \wt G(x_0,x_0)
= \sum_{y:y \sim x_0} \frac{\wt F(x_0,y)\wt F(y,x_0)}{1- \wt F(x_0,y)\wt F(y,x_0)}
= \tilde g_{x_0}(x_0)-1\,,
$$
and Claim 2 is true for $x=x_0\,$. Second, for $x \ne x_0$, let $u$ be the 
neighbour of $x$ on $\pi(x,x_0)$. Then
$$
\begin{aligned}
&P\tilde g_{x_0}(x) 
= \sum_{y:y \sim x, y \ne u} p(x,y)\wt F(y,x) \wt G(x,x_0)
+ p(x,u)\wt G(u,x_0)\\
&\quad= \underbrace{\sum_{y:y \sim x} 
\frac{\wt F(x,y)\wt F(y,x)}{1- \wt F(x,y)\wt F(y,x)}}_{\displaystyle
\wt G(x,x)-1}\frac{\wt G(x,x_0)}{\wt G(x,x)} 
- p(x,u) \wt F(u,x)\wt G(x,x_0) + p(x,u)\wt G(u,x_0)\\
&\quad= G(x,x_0)\left(1 - \frac{1}{\wt G(x,x)} -p(x,u) \wt F(u,x)
+ p(x,u)\frac{1}{\wt F(x,u)}\right) = \tilde g_{x_0}(x)\,,
\end{aligned}
$$
since $p(x,u)/\wt F(x,u) - p(x,u)\wt F(u,x) = 1/\wt G(x,x)$ by \eqref{eq:pxy}. This
completes the proof of Claim 2.

\smallskip

Now we can conclude: the function $\tilde g_{x_0}$ is non-constant, positive and 
superharmonic.
Therefore the random walk with transition matrix $P$ given by \eqref{eq:pxy}
is transient and does posses a Green function $G(x,y)$. Furthermore, by the
Riesz decomposition theorem, we have
$$
\tilde g_{x_0} = Gf +h\,,
$$
where $h$ is a non-negative harmonic function and the charge $f$
of the potential $Gf(x) = \sum_y G(x,y)f(y)$ is 
$f = \tilde g_{x_0} - P\tilde g_{x_0}= \uno_{x_0}\,$.  That is,
$$
\wt G(x,x_0) = G(x,x_0) + h(x) \quad \text{for all}\;x
$$
Now let $\xi \in \partial T$ and $y = x_0 \wedge \xi$. 
If $x \in T_y$ then by our construction
$$
\wt G(x,x_0)= \wt G(x,o) \frac{\wt G(y,x_0)}{\phi(y)} \phi(x) \to 0 \quad 
\text{as}\; x \to \xi\,.
$$ 
Therefore $\wt G(\cdot, x_0)$ vanishes at infinity, and the same must hold
for $h$. By the maximum principle, $h \equiv 0$. 

We conclude
that $\wt G(x,y) = G(x,y)$ for all $x,y\in T$. But then, by our construction,
also $\wt F(x,y) =F(x,y)$, the ``first hitting'' kernel associated with $P$. 
Comparing $\eqref{eq:re-mu}$ with $\eqref{eq:nu}$, we see that
$\mu = \nu_o\,$. This completes the proof.
\end{proof}
 
\bigskip

{\bf C. Some remarks}

\begin{rmk}\label{rmk:non-compact} In the present notes, we have restricted
attention to \emph{compact} ultrametric spaces for two reasons. First, Kigami's 
approach \cite{Ki}
starting with a random walk on a rooted, locally finite tree $T$ 
is restricted to that situation, because $\partial T$ is compact. 

On the other hand, the approach of \cite{BGP2} is not restricted to
compact spaces.  In case of a non-compact, locally compact tree, one constructs
the tree in the same way (the vertex set corresponds to the collection of all 
closed balls), but then the tree will have ``its root at infinity'', i.e.,
the ultrametric space becomes $\partial^* T = \partial T \setminus \{\varpi\}$,
where $\varpi$ is a fixed reference end of $T$. In this situation, the predecessor
$x^-$ of a vertex $x$ is the neighbour of $x$ on $\pi(x,\varpi)$. In the definition
\ref{def:element} of an ultrametric element, we then need besides monotonicity 
that $\phi$ tends to $\infty$ along $\pi(x,\varpi)$, while it has to tend to 
$0$ along any  geodesic going to $\partial^* T$. In this setting, the
construction of a $(\phi,\mu,\sigma)$-process remains as in \eqref{eq:isotropic1}
and \eqref{eq:isotropic1}, but $\mu$ may have infinite mass: a Radon measure
supported on the whole of $\partial^* T$.

In the non-compact case, however -- and this is the second reason -- 
at the moment we do not see
an elegant and concise interpretation (analogous
to the $K$-process)  of a $(\phi,\mu,\sigma)$-process in terms of a random walk. 
\end{rmk}

\begin{rmk}\label{rmk:mixed} Here, as in \cite{BGP2}, we have always assumed
that the ultrametric space has no isolated points, which for the tree means
that $\deg^+ \ge 2$. Theme of \cite{BGP1} is the opposite situation, where
all points are isolated, i.e., the space is discrete.

From the point of view of the present notes, the mixed situation works 
equally well. If we start with a locally compact ultrametric space having
both isolated and non-isolated points, we can construct the tree in the same
way. The vertex set is the collection of all closed balls. The isolated
points will then become \emph{terminal vertices} of the tree, which have no 
neighbour besides the predecessor. All interior (non terminal) vertices will 
have forward degree $\ge 2$. 

In the compact case, the boundary $\partial T$ of that tree should consist of 
the terminal vertices together with the space of ends.
In the non-compact case, we will again have a reference end $\varpi$ 
as in Remark \ref{rmk:non-compact}. 

The definition of an ultrametric element
remains the same, but we only need to define it on interior vertices.
In this general setting, the construction of $(\phi,\mu,\sigma)$ processes
remains unchanged.

When the ultrametric space is compact, even in presence of isolated points,
the duality with random walks on the associated tree remains as explained
here. The random walk should then be such that the terminal vertices are
absorbing, and that the Green kernel tends to $0$ at infinity. The Doob-Na\"\i m
formula extends readily to that setting.
\end{rmk}

\begin{rmk}\label{rmk:rwstart}
Let us now consider the general (compact) situation when we start with a
transient random walk on a locally finite tree, rooted $T$. 

The limit distribution
$\nu_o$ will in general not be supported by the whole of $\partial T$. 
The $K$-process can of course still be constructed, see \cite{Ki}, but will
evolve naturally on $\supp(\nu_o)$ only. Thus, we can consider our ultrametric
space to be just $\supp(\nu_o)$, as mentioned further above. The tree associated
with this ultrametric space will in general not be the tree we started with,
nor its \emph{transient skeleton} as defined in \cite[(9.27)]{W-Markov} 
(the subtree induced by $o$ and all $x \in T\setminus\{o\}$ with $F(x,x^-) < 1$).

The reasons are twofold. First, the construction of the tree associated with
$\supp(\nu_o)$ will never give back vertices with forward degree $1$. Second,
some end contained in $\supp(\nu_o)$ may be isolated within that set, while
not being isolated in $\partial T$. But then this element will become
a terminal vertex in the tree associated with the ultrametric (sub)space
$\supp(\nu_o)$. This occurs precisely when the transient skeleton has
isolated ends. 

Thus, one should work with a modified ``reduced'' tree plus random walk
in order to maintain the duality between random walks and isotropic jump
processes.
\end{rmk}

\begin{rmk}\label{rmk:regularity} 
Given a transient random walk on the tree $T$, 
{\sc Kigami}~\cite{Ki} recovers an \emph{intrinsic metric} of the $K$-process
on $\partial T$ in terms of what is called an ultrametric element in the
present paper. This is of course $\phi(x) = G(x,o)$, denoted $D_x$ in
\cite{Ki}, where it is shown that for $\nu_o$-almost every $\xi \in \partial T$,
$D_x \to$ along the geodesic ray $\pi(o,\xi)$. This has the following 
Potential theoretic interpretation. 

A point $\xi \in \partial T$ is called \emph{regular for the Dirichlet problem,}
if for every $\varphi \in C(\partial T)$, its Poisson transform $h_{\varphi}$
satisfies $\lim_{x \to \xi} h_{\varphi}(x) = \varphi(\xi)$. 
It is known from {\sc Cartwright, Soardi and Woess}~\cite[Remark 2]{CSW} that
$\xi$ is regular if and only if  $\lim_{x \to \xi} G(x,o)=0$ (as long as $T$ has
at least $2$ ends), see also \cite[Theorem 9.43]{W-Markov}. By 
that theorem 9.4, the set of regular points has $\nu_o$-measure 1.
That is, the Green kernel vanishes at $\nu_o$-almost every boundary point.
\end{rmk}

\begin{rmk}\label{rmk:vondra}
In the proof of Theorem \ref{thm:II}, we have reconstructed random
walk transition probabilities from $C\cdot\phi(x) = G(x,o)$ and
$\mu = \nu_o\,$. 

A similar (a bit simpler) question was addressed by {\sc Vondra\v cek}~\cite{Vo}:
how to reconstruct the transition probabilities from \emph{all} limit distributions
$\nu_x\,$, $x \in T$, on the boundary. This, as well as out method, basically
come from \eqref{eq:formula2}, which can be traced back to {\sc Cartier}~\cite{Ca}.
\end{rmk}

\section{Appendix: a proof of the Doob-Na\"\i m formula on trees}
\label{sec:appendix}

We start with the following observation.

\begin{lem}\label{lem:invariant}
The measure $\Theta_o(\xi,\eta) \,d\nu_o(\xi)\,d\nu_o(\eta)$ 
on $\partial T \times \partial T$ is invariant with respect to changing
the base point (root) $o$.
\end{lem}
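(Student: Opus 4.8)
The plan is to recast the Na\"\i m kernel in a form where the dependence on the root separates cleanly. First I would prove, for $\xi\ne\eta$, the identity
\[
\Theta_o(\xi,\eta) = \msf(o)\lim_{x\to\xi,\;y\to\eta} \frac{G(x,y)}{G(x,o)\,G(o,y)}.
\]
To establish it, fix $c=\xi\wedge\eta$ and let $x,y$ run out along $\pi(o,\xi)$ and $\pi(o,\eta)$ far enough that $x\wedge y=c$; then $c$ is the median of $o,x,y$, so $c$ lies on each of $\pi(x,y)$, $\pi(o,x)$ and $\pi(o,y)$. Factoring by \eqref{eq:FG} and \eqref{eq:FF} gives $G(x,y)=F(x,c)F(c,y)G(y,y)$, $G(x,o)=F(x,c)F(c,o)G(o,o)$ and $G(o,y)=F(o,c)F(c,y)G(y,y)$; the factors $F(x,c)$, $F(c,y)$ and $G(y,y)$ cancel, leaving exactly $\msf(o)\big/\bigl(G(o,o)F(o,c)F(c,o)\bigr)=\Theta_o(\xi,\eta)$. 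In particular the ratio is already constant once $x,y$ are deep enough, so the limit exists trivially.

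The second ingredient is the Radon--Nikodym derivative of the limit distributions, namely that $d\nu_{o'}/d\nu_o(\xi)=K(o',\xi)$, where $K(o',\xi)=\lim_{x\to\xi}G(o',x)/G(o,x)$ is the Martin kernel; I would take this from the Poisson--Martin representation already invoked in \S\ref{sec:processes}.{\bf B}, or verify it directly from \eqref{eq:nu}. With the separated formula in hand, the ratio of the two kernels is a short computation: the common factor $G(x,y)$ cancels, and after rewriting $G(x,o)=\tfrac{\msf(o)}{\msf(x)}G(o,x)$ and $G(x,o')=\tfrac{\msf(o')}{\msf(x)}G(o',x)$ by reversibility \eqref{eq:reversible}, the weights $\msf(o),\msf(o')$ cancel against those produced by reversibility, and one is left with
\[
\frac{\Theta_{o'}(\xi,\eta)}{\Theta_o(\xi,\eta)}=\lim_{x\to\xi}\frac{G(o,x)}{G(o',x)}\cdot\lim_{y\to\eta}\frac{G(o,y)}{G(o',y)}=\frac{1}{K(o',\xi)\,K(o',\eta)}.
\]

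Finally I would combine the two ingredients. Substituting $d\nu_{o'}=K(o',\cdot)\,d\nu_o$ together with the kernel ratio yields
\[
\Theta_{o'}(\xi,\eta)\,d\nu_{o'}(\xi)\,d\nu_{o'}(\eta)=\frac{\Theta_o(\xi,\eta)}{K(o',\xi)K(o',\eta)}\,K(o',\xi)\,K(o',\eta)\,d\nu_o(\xi)\,d\nu_o(\eta)=\Theta_o(\xi,\eta)\,d\nu_o(\xi)\,d\nu_o(\eta),
\]
which is the asserted invariance. Since any two roots are joined by a finite geodesic, it would even suffice to treat the case $o'\sim o$ and iterate, but the computation above needs no such reduction.

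I expect the main difficulty to be organisational rather than conceptual: one must get the confluent/median bookkeeping right so that the $F$- and $G$-factorizations legitimately apply, and keep the reversibility weights straight so the $\msf$-factors cancel exactly. The conceptual point that makes the invariance almost automatic is that the root enters $\Theta_o$ only through the separable factors $G(\cdot,o)$, and these are precisely the densities relating $\nu_{o'}$ to $\nu_o$.
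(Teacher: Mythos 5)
Your proposal is correct, but it takes a genuinely different route from the paper's proof. The paper first reduces to the case of adjacent roots $o' \sim o$, invokes the same Martin-kernel identity $d\nu_{o'}/d\nu_o = K(o',\cdot)$, and then verifies the resulting kernel identity by a four-case analysis according to the positions of $\xi,\eta$ relative to $\partial T_{o'}$, with explicit bookkeeping of the two confluents $\xi\wedge_o\eta$ and $\xi\wedge_{o'}\eta$ via \eqref{eq:FG}, \eqref{eq:FF} and \eqref{eq:reversible}. You instead prove the separable representation
$$
\Theta_o(\xi,\eta) \;=\; \msf(o)\,\lim_{x\to\xi,\;y\to\eta}\frac{G(x,y)}{G(x,o)\,G(o,y)}\,,
$$
which is in fact the classical definition of the Na\"\i m kernel, and your verification of it (the ratio stabilizes once $x,y$ lie beyond the median $c=\xi\wedge\eta$, by the same factorization identities) is sound; from this the invariance follows by one uniform computation, with the $\msf$-weights cancelling exactly against reversibility and the Green-function ratios producing precisely $1/\bigl(K(o',\xi)K(o',\eta)\bigr)$. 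What your approach buys is the elimination of both the reduction to neighbouring roots and the case analysis: the root-dependence of $\Theta_o$ is isolated in the factors $G(\cdot,o)$, which are exactly the densities relating $\nu_{o'}$ to $\nu_o$, so the invariance becomes structurally transparent rather than a case-by-case coincidence. What the paper's approach buys is that it stays entirely within the finite, explicit tree formulas (no limit along rays is needed, only the confluent formula for the Martin kernel), which fits the expository ``bare hands'' style of the appendix. Both proofs rest on the same unproved external ingredient, namely that the Martin kernel is the Radon--Nikodym derivative of the limit distributions.
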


\begin{proof} We want to replace the base point $o$ with some other $x \in T$.
We may assume that $x \sim o$. Indeed, then we may step by step replace
the current base point by one of its neighbours to obtain the result for
arbitrary $x$.

Recall that the confluent that appears in the definition \eqref{eq:Naim} of 
$\Theta_o$ depends on
the root $o$, while for $\Theta_x$ it becomes the one with respect to $x$.
It is a well-known fact that 
$$
\frac{d\nu_x}{d\nu_o}(\xi) = K(x,\xi) = 
\frac{G(x,x\wedge_o \xi)}{G(o,x\wedge_o \xi)}\,,
$$
the \emph{Martin kernel.}
Thus, we have to show that for all $\xi,\eta \in \partial T$ ($\xi \ne \eta$)
$$
\dfrac{\msf(o)}{G(o,o)F(o,\xi\wedge_o\eta) F(\xi\wedge_o\eta,o)} = 
\dfrac{\msf(x)K(x,\xi) K(x,\eta)}{G(x,x)F(x,\xi\wedge_x\eta) F(\xi\wedge_x\eta,x)} 
\,.
$$

\medskip

\emph{Case 1.} $\xi, \eta \in \partial T_x$. Then
$\xi\wedge_o\eta = \xi\wedge_x\eta =:y \in T_x$,
and $x\wedge_o \xi =x\wedge_o \eta =x$.
Thus, using \eqref{eq:FG}, \eqref{eq:FF} and the fact that 
$\msf(x)/G(o,x) = \msf(o)/G(x,o)$ by \eqref{eq:reversible}
$$
\begin{aligned}
\frac{\msf(x)K(x,\xi) K(x,\eta)}{G(x,x)F(x,\xi\wedge_x\eta) F(\xi\wedge_x\eta,x)}  
&= \frac{\msf(x)}{G(x,x)F(x,y) F(y,x)} \left(\frac{G(x,x)}{G(o,x)}\right)^2\\ 
&= \frac{\msf(o)G(x,x)}{F(x,y) F(y,x)G(o,x)G(x,o)}\\
&= \frac{\msf(o)}{F(x,y) F(y,x)F(o,x)F(x,o)G(o,o)}\\
&= \frac{\msf(o)}{F(o,y) F(y,o) G(o,o)}\,, 
\end{aligned}
$$
as required.

\smallskip

\emph{Case 2.} $\xi, \eta \in \partial T \setminus \partial T_x\,.$. Then
$\xi\wedge_o\eta = \xi\wedge_x\eta =:z \in T \setminus T_x$,
and $x\wedge_o \xi =x\wedge_o \eta =o$. 

\smallskip

\emph{Case 3.} $\xi \in \partial T_x\,$, 
$\eta \in \partial T \setminus \partial T_x$. Then
$\xi\wedge_o\eta = o$, $\xi\wedge_x\eta =x$,
$x\wedge_o \xi =x$ and $x \wedge_o \eta =o$. 

\smallskip

\emph{Case 4.} $\xi \in \partial T \setminus  \partial T_x\,$, 
$\eta \in \partial T_x\,$. This is like case 3, exchanging the roles of 
$\xi$ and $\eta$.

\smallskip

In all those cases, the computation is done very similarly to case 1,
a straightforward exercise.
\end{proof}

For proving Theorem \ref{thm:doob-naim}, we need a few more facts related 
with the network setting; compare e.g. with \cite[\S 4.D]{W-Markov}.

The space $\Dcal(T)$ of \eqref{eq:D(T)} is a Hilbert space when equipped
with the inner product
$$
(f,g) = \Dir_T(f,g) + f(o)g(o)\,.
$$
The subspace $\Dcal_0(T)$ is defined as the closure of the space of finitely
supported functions in $\Dcal(T)$. It is a proper subspace if and only if the 
random walk is transient, and then the function $G_y(x) = G(x,y)$ is 
in $\Dcal_0(T)$ for any $y \in T$ \cite{Ya}, \cite{So}. We need the formula 
\begin{equation}\label{eq:Green}
\Dir_T(f,G_y) = \msf(y) f(y) \quad \text{for every}\; f \in \Dcal_0(T)\,.
\end{equation}
Given a branch $T_z$ of $T$ ($z \in T \setminus \{ o\}$),
we can consider it as a subnetwork equipped with the same conductances
$a(x,y)$ for $[x,y] \in E(T_z)$. The associated measure on $T_z$ is 
$$
\msf_{T_z}(x) = \sum_{y \in T_z: y \sim x} a(x,y) = 
\begin{cases} \msf(x)&\text{if}\; x \in T_z \setminus \{z\}\,,\\
\msf(z)-a(z,z^-)&\text{if}\; x =z\,. 
\end{cases}
$$
The resulting random walk on $T_z$ has transition probabilities
$$
p_{T_z}(x,y) = \frac{a(x,y)}{\msf_{T_z}(x)} = 
\begin{cases} p(x,y)&\text{if}\; x \in T_z \setminus \{z\}\,,\;y \sim x\,,\\[9pt]
\dfrac{p(z,y)}{1-p(z,z^-)}&\text{if}\; x =z\,,\;y \sim x\,. 
\end{cases}
$$
We have $F_{T_z}(x,x^-) = F(x,x^-)$ and thus also 
$F_{T_z}(x,z) = F(x,z)$ for every $x \in T_z \setminus \{z\}$, because before its
first visit to $z$,  the random walk on $T_z$ obeys the same transition 
probabilities as the original  random walk on $T$. It is then easy to see
\cite[p. 241]{W-Markov} that the random walk on $T_z$ is transient if
and only if for the original random walk, $F(z,z^-) <1$, which in turn holds
if and only $\nu_o(\partial T_z) > 0$. (In other parts of this paper, this
is always assumed, but for the proof of Theorem \ref{thm:doob-naim}, we 
just consider the random walk on the whole of $T$ to be transient.)     
Conversely, if $F(z,z^-)=1$ then $F(x,z)=1$ for all $x \in T_z\,$.

Below, we shall need the following formula for the 
limit distributions.

\begin{lem}\label{lem:nu} For $x \in T \setminus \{o\}$, 
$$
\nu_x(\partial T_x) =  1 - p(x,x^-)\bigl(G(x,x) - G(x^-,x)\bigr)\,.
$$
\end{lem}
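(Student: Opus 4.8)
The goal is to compute $\nu_x(\partial T_x)$, the probability that a random walk started at $x$ converges to an end lying in the branch $T_x$. My plan is to decompose this event according to whether the walk ever visits $x^-$, the predecessor of $x$, after its start.

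First I would observe that the walk starting at $x$ fails to land in $\partial T_x$ precisely when it eventually escapes through $x^-$ and converges to an end outside $T_x$; by the nearest-neighbour property, to leave $T_x$ the walk must pass through the edge $[x^-,x]$, i.e.\ visit $x^-$. So the natural first step is to write
\begin{equation}\label{eq:plan-decomp}
\nu_x(\partial T_x) = 1 - \Prob_x[\,Z_n \text{ visits } x^- \text{ and then converges into } \partial T\setminus\partial T_x\,].
\end{equation}
After a visit to $x^-$ the walk restarts there (strong Markov property), so I expect the right-hand probability to factor as $F(x,x^-)$ times the limiting probability, started from $x^-$, of converging outside $\partial T_x$. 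Rather than pushing this conditional decomposition all the way, though, I would prefer to reduce directly to quantities that appear in \eqref{eq:nu} and to the Green kernel identities.

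The cleaner route is to specialize the second case of formula \eqref{eq:nu}, which gives $\nu_x(\partial T_y)$ when $x \in T_y$. Taking $y = x$ there (so the condition $x \in T_x$ holds and $F(x,x)=1$) yields
\begin{equation}\label{eq:plan-nu}
\nu_x(\partial T_x) = 1 - \frac{F(x,x^-)-F(x^-,x)F(x,x^-)}{1- F(x^-,x)F(x,x^-)}
= 1 - \frac{F(x,x^-)\bigl(1-F(x^-,x)\bigr)}{1- F(x^-,x)F(x,x^-)}.
\end{equation}
So the problem reduces to showing that the subtracted fraction equals $p(x,x^-)\bigl(G(x,x)-G(x^-,x)\bigr)$. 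The main work, and the step I expect to be the real obstacle, is to convert this fraction built from $F$-quantities into the stated Green-kernel expression using the identities \eqref{eq:FG}--\eqref{eq:FF} together with \eqref{eq:formula1}.

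To carry out that conversion I would start from $G(x,x)-G(x^-,x)$. Using $G(x^-,x)=F(x^-,x)G(x,x)$ from \eqref{eq:FG}, this equals $G(x,x)\bigl(1-F(x^-,x)\bigr)$. Hence
\begin{equation}\label{eq:plan-target}
p(x,x^-)\bigl(G(x,x)-G(x^-,x)\bigr) = p(x,x^-)G(x,x)\bigl(1-F(x^-,x)\bigr).
\end{equation}
Now I invoke \eqref{eq:formula1} with the neighbour $y=x^-$, namely $G(x,x)\,p(x,x^-) = F(x,x^-)\big/\bigl(1-F(x,x^-)F(x^-,x)\bigr)$, and substitute it into \eqref{eq:plan-target}; the result is exactly the fraction appearing in \eqref{eq:plan-nu}. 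Combining with \eqref{eq:plan-nu} gives the claim. The only care needed is bookkeeping of the symmetric pair $F(x,x^-)$ versus $F(x^-,x)$ and confirming the denominators match, which is routine once \eqref{eq:formula1} is applied in the correct orientation.
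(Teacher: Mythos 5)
Your proof is correct and is essentially the paper's own argument: both specialize the second case of \eqref{eq:nu} to $y=x$, rewrite $G(x,x)-G(x^-,x)$ as $G(x,x)\bigl(1-F(x^-,x)\bigr)$ via \eqref{eq:FG}, and apply \eqref{eq:formula1} with $y=x^-$ to identify the resulting fraction. The probabilistic decomposition you sketch at the start is never used and is harmless; the paper simply omits it.
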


\begin{proof}
By \cite[Lemma 9.35]{W-Markov},
$$
G(x,x)p(x,x^-) = \frac{F(x,x^-)}{1-F(x,x^-)F(x^-,x)}
$$
Thus, 
$$
p(x,x^-)\bigl(G(x,x) - G(x^-,x)\bigr) = \bigl(1-F(x^-,x)\bigr) G(x,x)p(x,x^-)
= 1 -\nu_x(\partial T_x)
$$ 
by a short computation using \eqref{eq:nu}
\end{proof}

\begin{proof}[\bf Proof of Theorem \ref{thm:doob-naim}]
We first prove formula \eqref{eq:doob-naim} for the case when
$\varphi= \uno_{\partial T_w}$ and $\psi=\uno_{\partial T_z}$ for two proper branches
$T_w$ and $T_z$ of $T$. They are either disjoint, or one of them contains
the other. 

\smallskip

\emph{Case 1. } $T_z \subset T_w\,.\quad$ (The case $T_w \subset T_z\,$ is 
analogous by symmetry.)

This means that $z \in T_w\,.$
For $\xi, \eta \in \partial T$ we have 
$\bigl(\varphi(\xi)-\varphi(\eta)\bigr) \bigl(\psi(\xi)-\psi(\eta)\bigr) =1$
if $\xi \in \partial T_z$ and $\eta \in \partial T \setminus \partial T_w$
or conversely, and $=0$ otherwise. By Lemma \ref{lem:invariant}, we may choose
$w$ as the base point.
Thus, the right hand side of \eqref{eq:doob-naim} is
$$
\int_{\partial T \setminus \partial T_w} \int_{\partial T_z} 
\Theta_w(\xi,\eta) \,d\nu_w(\xi)\,d\nu_w(\eta) = 
\dfrac{\msf(w)}{G(w,w)}\,\nu_w(\partial T \setminus \partial T_w)\nu_w(\partial T_z)
\,,
$$
since $\xi\wedge_w\eta =w$ and $F(w,w)=1$.

Let us now turn to the left hand side of \eqref{eq:doob-naim}. 
The Poisson transforms of $\varphi$ and $\psi$ are
$$
h_{\varphi}(x) = \nu_x(\partial T_w) \AND h_{\psi}(x) = \nu_x(\partial T_z).
$$
By \eqref{eq:nu}, 
$$
\begin{aligned} 
h_{\varphi}(x) &= F(x,w) \nu_w(\partial T_w)\,,
\quad x \in \{w\} \cup (T\setminus T_w)\\
1-h_{\varphi}(x) &= F(x,w) (\partial T \setminus \partial T_w)\,, 
\quad x \in T_w\,,
\end{aligned}
$$
We set $F_w(x)=F(x,w)$ and write $h_{\varphi}(x) - h_{\varphi}(x^-) =
\bigl(1-h_{\varphi}(x^-)\bigr) - \bigl(1-h_{\varphi}(x)\bigr)$ when this is convenient, 
and analogously for $h_{\psi}\,$. Then we get
$$
\begin{aligned}
&\Dir_T(h_{\varphi}\,,h_{\psi})\\ 
&= \!\!\!\sum_{[x,x^-] \in E(T) \setminus E(T_w)}\!\!\!
a(x,x^-) \bigl(F(x,w) - F(x^-,w)\bigr)\nu_w(\partial T_w)\,
\bigl(F(x,w) - F(x^-,w)\bigr)\nu_w(\partial T_z)\\
&\quad- \!\!\!\!\!\!\sum_{[x,x^-] \in E(T_w) \setminus E(T_z)} \!\!\!\!\!\!
a(x,x^-) \bigl(F(x,w) - F(x^-,w)\bigr)\nu_w(\partial T \setminus \partial T_w)\,
\bigl(F(x,w) - F(x^-,w)\bigr)\nu_w(\partial T_z)\\
&\quad+ \!\!\!\!\!\!\sum_{[x,x^-] \in E(T_z)} \!\!\!\!\!\!
a(x,x^-) \bigl(F(x,w) - F(x^-,w)\bigr)\nu_w(\partial T \setminus \partial T_w)\,
\bigl(F(x,w) - F(x^-,w)\bigr)\nu_w(\partial T \setminus \partial T_z)\\
&= \Dir_T(F_w\,,F_z)\nu_w(\partial T_w)\nu_w(\partial T_z)
- \Dir_{T_w}(F_w\,,F_z)\nu_w(\partial T_z)
+ \Dir_{T_z}(F_w\,,F_z)\nu_w(\partial T \setminus \partial T_w)\,,
\end{aligned}
$$
where of course $\Dir_{T_w}$ is the Dirichlet form of the random walk 
on the branch $T_w\,$, as discussed above, and analogously for $\Dir_{T_z}\,$.
Now $F_w = G_w/G(w,w)$ by \eqref{eq:FG}, whence \eqref{eq:Green} yields
\begin{equation}\label{eq:DirT}
\Dir_T(F_w,F_z) = \frac{\Dir_T(G_w,F_z)}{G(w,w)} = \frac{\msf(w)F(w,z)}{G(w,w)}\,.
\end{equation}
Recall that for the random walk on $T_w\,$, we have $F_{T_w}(x,w) = F(x,w),$ 
whence $G_{T_w}(x,w) = F(x,w)G_{T_w}(w,w) = F(x,w)$ for every $x \in T_w\,$.
Also, $\msf_{T_w}(w) = \msf(w)-a(w,w^-) = \msf(w)\bigl(1-p(w,w^-)\bigr)$.
We apply \eqref{eq:DirT} to that random walk and get
$$
\Dir_{T_w}(F_w,F_z) = \frac{\msf(w)(1-p(w,w^-))F(w,z)}{G_{T_w}(w,w)}\,.
$$
We now apply \eqref{eq:GU} and \eqref{eq:UF}, recalling in addition
that $p_{T_w}(w,x) = \frac{p(w,x)}{1-p(w,w^-)}$
for $x \in T_w\,$:
$$
\begin{aligned}
\frac{1-p(w,w^-)}{G_{T_w}(w,w)} &= 1-p(w,w^-) - \bigl(1-p(w,w^-)\bigr)U_{T_w}(w,w)\\ 
&= 1-p(w,w^-) - \sum_{x:x^-=w} p(w,x)F(x,w)\\
&= 1- p(w,w^-) - \bigl( U(w,w) - p(w,w^-)F(w^-,w)\bigr)\\
&= \frac{1}{G(w,w)} - p(w,w^-)\bigl(1-F(w^-,w)\bigr) 
= \frac{\nu_w(\partial T_w)}{G(w,w)} \qquad \text{by Lemma \ref{lem:nu}.}
\end{aligned}
$$
We have obtained 
$$
\Dir_{T_w}(F_w,F_z) = \frac{\msf(w)F(w,z)}{G(w,w)}\,\nu_w(\partial T_w).
$$
In the same way, exchanging roles between $T_z$ and $T_w\,$ and using reversibility
\eqref{eq:reversible},
$$
\Dir_{T_z}(F_w,F_z) = \frac{\msf(z)F(z,w)}{G(z,z)}\,\nu_z(\partial T_z)
= \frac{\msf(w)F(w,z)}{G(w,w)}\,\nu_z(\partial T_z)
= \frac{\msf(w)}{G(w,w)}\,\nu_w(\partial T_z)
$$
Putting things together, we get that
$$
\Dir_T(h_{\varphi}\,,h_{\psi})= \Dir_{T_z}(F_w\,,F_z)\nu_w(\partial T \setminus \partial T_w)
= \frac{\msf(w)}{G(w,w)}\,\nu_w(\partial T_z)
\nu_w(\partial T \setminus \partial T_w),
$$
as proposed.

\smallskip

\emph{Case 2. } $T_z \cap T_w = \emptyset\,.$

In view of Lemma \ref{lem:invariant}, both sides of equation 
\eqref{eq:doob-naim} are independent of the root $o$. Thus we may
declare our root to be one of the neighbours of $w$ that is not
on $\pi(w,z)$. Also, let $\bar w$ be the neighbour of $w$ on
$\pi(z,w)$. Then, with our chosen new root, 
the complement of the ``old'' $T_w$ is $T_{\bar w}$, which contains
$T_z$ (The latter remains the same with respect ``new'').

Thus, we can apply the result of case 1 to $T_{\bar w}$ and $T_z$.
This means that we have to replace the functions $\varphi$ and $h_{\varphi}$ with 
$1-\varphi$ and
$1-h_{\varphi}$, respectively, which just means that we change the sign on both sides
of \eqref{eq:doob-naim}. We are re-conducted to Case 1 without further
computations.

\smallskip

We deduce from what we have done so far, and from linearity of the Poisson
transform as well of bilinearity of the forms on both sides of equation 
\eqref{eq:doob-naim}, that this equation holds for linear combinations of
indicator functions of sets $\partial T_w\,$. Those indicator functions are 
dense in the space $C(\partial T)$ with respect to the $\max$-norm.
Thus, \eqref{eq:doob-naim} holds for all continuous functions on $\partial T$.
The extension to all of $\Dcal(\partial T,P)$ is by standard approximation.
\end{proof}

\end{document}